\numberwithin{equation}{section}%Equazioni numerate come le sezioni
\colorlet{ColRecap}{Black} %Colore paragrafi recap
\colorlet{ColDefB}{Black} %Colore corpo def
\colorlet{ColRmkB}{Black} %Colore corpo rmk-rmks
\colorlet{ColExB}{Black} %Colore corpo ex
\colorlet{ColThmB}{Black} %Colore corpo thm-prop
\colorlet{ColApp}{Black} %Colore appendici
\colorlet{ColThm}{Black}
\colorlet{ColDef}{Black}
\colorlet{ColRmk}{Black}
\colorlet{ColEx}{Black}
\colorlet{ColRcp}{Black}
\colorlet{ColPrel}{Black}
\colorlet{ColSA}{Black}
\colorlet{ColAss}{Black}
\DeclareMathAlphabet{\mathpzc}{OT1}{pzc}{m}{it}
	\newcommand{\Omhol}{\Upomega} %Richiede pacchetto upgreek
\DeclareMathOperator{\Id}{Id}
\DeclareMathOperator{\rk}{rk}
\DeclareMathOperator{\pr}{pr}
\DeclareMathOperator{\Div}{Div}
\DeclareMathOperator{\M}{M}
\DeclareMathOperator{\Hom}{Hom}  %Hom set
\DeclareMathOperator{\End}{End}
\DeclareMathOperator{\Iso}{Iso}
\DeclareMathOperator{\Aut}{Aut}
\DeclareMathOperator{\Ob}{Ob}
\DeclareMathOperator{\Mor}{Mor}
\DeclareMathOperator{\MOR}{MOR}  %Category of morphisms in a 2-category
\DeclareMathOperator{\op}{op}
\DeclareMathOperator{\Rep}{\mathbf{Rep}} %Category of representations (of a group, quiver...)
\DeclareMathOperator{\Coh}{\mathbf{Coh}} %Category of coherent modules
\DeclareMathOperator{\Ext}{Ext}
\DeclareMathOperator{\Bl}{Bl} %Blow-up
\DeclareMathOperator{\Pic}{Pic}
\DeclareMathOperator{\num}{num}
\DeclareMathOperator{\cc}{c} %Chern class
\DeclareMathOperator{\GL}{GL}
\theoremstyle{plain}
\newtheorem{theorem}{Theorem}[section]
\newtheorem{lemma}[theorem]{Lemma}
\newtheorem{corollary}[theorem]{Corollary}
\newtheorem{proposition}[theorem]{Proposition}
\theoremstyle{definition}
\newtheorem{definition}[theorem]{Definition}
\newtheorem{remark}[theorem]{Remark}
\newtheorem{remarks}[theorem]{Remarks}
\newtheorem{example}[theorem]{Example}
\newcommand{\rroangle}{\rotatebox[origin=c]{-90}{$\angle$}}
\newcommand{\roangle}{\rotatebox[origin=c]{-48}{$\angle$}}
\begin{document}

%\chapter{Quick links}
%
%

\title{Linear data for framed sheaves via exceptional sequences}
\author{Andrea Maiorana}
%\date{\today}

\begin{abstract}
Let $X$ be a smooth projective surface with a full strong exceptional sequence $\mathfrak{E}$. Under certain conditions, we describe the moduli spaces of framed sheaves on a line in $X$ via linear data, i.e.\ by realizing them as principal bundles over a stack of representations of the bound quiver associated to $\mathfrak{E}$.
\end{abstract}

\maketitle

%%%
%
%
\section{Introduction}
\subsection{Background}\label{Background}
In the whole paper, $X$ will denote a smooth projective irreducible complex surface and $C_0\subset X$ a smooth curve.

We are interested in studying moduli spaces of \emph{framed sheaves} on $(X,C_0)$, that is couples $(\mathcal{E},\phi)$ consisting of a torsion-free sheaf $\mathcal{E}$ on $X$ and a \emph{framing}, an isomorphism $\phi:\mathcal{E}{\restriction_{C_0}}\to\mathcal{V}$ to a fixed vector bundle on $C_0$ which in this paper will always be the trivial bundle. We denote by $\M^\textup{fr}_{X,C_0}(v)$ the moduli space of framed sheaves whose numerical invariants $v\in K_{\num}(X)$ (i.e.\ rank and Chern classes) are fixed, which was shown in \cite{HuyLeh95Framed,BruMar11Mod} to exist as a quasi-projective scheme under mild conditions on $C_0$. These moduli spaces often provide desingularizations of spaces of ideal instantons, which makes them interesting for applications in topological field theories. Moreover, they reduce as special cases to Hilbert schemes of points on the open variety $X\setminus C_0$.

In this paper we are concerned with the approach via \emph{linear data}, or \emph{ADHM data}, to the construction of $\M^\textup{fr}_{X,C_0}(v)$, which essentially consists, for suitable choices of $X$ and $C_0$, in representing it as a space of matrices (in fact, of representations of a quiver) up to the action of a linear algebraic group. This method, originally developed for $X=\mathbb{P}^2$ in \cite{Don84Ins,Nakaj99Lect}, has been later carried out in \cite{King89Inst,Henni14Monads,BaBrRa15Monad} to describe moduli of framed sheaves on blow-ups of $\mathbb{P}^2$ and Hirzebruch surfaces $\Sigma_e$, $e>0$. In each case the strategy was the following:
	\begin{enumerate}
	\item First, one shows that each torsion-free sheaf $\mathcal{E}$ trivial on $C_0$ satisfies some cohomological vanishings, which imply that a certain generalized Beilinson spectral sequence degenerates, giving a quasi-isomorphism between $\mathcal{E}$ and a complex $M^{-1}\to M^0\to M^1$ of vector bundles (called a \emph{monad}), where each $M^i$ only depends on the numerical invariants of $\mathcal{E}$.
	\item By using standard techniques on monads (see e.g.\ \cite[\S2.3-2.4]{OSS80Ve}) isomorphism classes of such sheaves are identified with isomorphism classes of their monads, and hence with the points of a certain subvariety $V\subset\Hom(M^{-1},M^0)\times\Hom(M^0,M^1)$ up to the action of $G:=\prod_i\Aut(M^i)$; typically these Hom-spaces are simple enough that $V$ can be naturally seen as a variety of linear maps between vector spaces. After including the framings in the data, $\M^\textup{fr}_{X,C_0}(v)$ is realized as a free quotient $\tilde{V}/G$.
	\end{enumerate}

\subsection{Outline and main results of the paper}
The goal of this paper is to reconsider this strategy from a deeper viewpoint that allows to construct the linear data description systematically, instead of focusing on a case-by-case analysis. This is suggested by the observation that the linear data that one gets can sometimes be seen as representations of the bound quiver $(Q,J)$ determined by a strong exceptional sequence $\mathfrak{E}$ in $D^b(X)$, and the existence of the generalized Beilinson spectral sequence mentioned before is a consequence of a derived equivalence $D^b(X)\simeq D^b(Q;J)$ induced by $\mathfrak{E}$. This approach was also used in \cite{Maior17Mod} to describe moduli spaces of Gieseker-semistable sheaves.

After reviewing the basics of moduli spaces of framed sheaves in \S\ref{Mod fr shvs}, we proceed as follows: first, in \S\ref{Framable shvs} we introduce an auxiliary algebraic stack $\mathfrak{M}_{X,C_0}^\textup{t}(v)$ which parameterizes torsion-free sheaves that are trivial on $C_0$ (which now on is assumed to be isomorphic to $\mathbb{P}^1$) and we show that the morphism $\M^\textup{fr}_{X,C_0}(v)\to\mathfrak{M}_{X,C_0}^\textup{t}(v)$ forgetting the framing is a principal $\GL(\rk v,\mathbb{C})$-bundle over it (principal bundles over stacks are discussed in Appendix \ref{App: prin bun on stack}).

At this point we focus on describing $\mathfrak{M}_{X,C_0}^\textup{t}(v)$ using the above-mentioned equivalence $\Psi:D^b(X)\simeq D^b(Q;J)$ (this machinery is briefly reviewed in \S\ref{Exc seq quiv mod}): in particular, we can use the stack $\mathfrak{M}_{Q,J}(d^v)$ of representations of $(Q,J)$ of fixed dimension vector $d^v$ (corresponding to the class $v$ via $\Psi$). Recall that this stack is a quotient $[X_{d^v,J}/G_{d^v}]$ of an affine scheme $X_{d^v,J}$ parameterizing $d^v$-dimensional representations of $(Q,J)$ by the natural action of the symmetry group $G_{d^v}$, which is a product of general linear groups.

Our goal of ``describing $\M_{X,C_0}^\textup{fr}(v)$ via linear data'' will now consist in embedding $\mathfrak{M}_{X,C_0}^\textup{t}(v)$ as an open substack $[U_v/G_{d^v}]\subset\mathfrak{M}_{Q,J}(d^v)$. This implies that $\M^\textup{fr}_{X,C_0}(v)$ inherits its local geometric properties from those of the better understood $\mathfrak{M}_{Q,J}(d^v)$. To show that this embedding exists (which, in the framework of the previous subsection, corresponds to showing the degeneration of the generalized Beilinson spectral sequence) we need some general cohomological vanishings for torsion-free sheaves trivial on the line $C_0$, which are proven in Prop.\ \ref{Prop: van fr sh mov lin}. Putting all together, we can summarize the main results as follows:

\begin{theorem}\label{Thm: main thm}
Suppose that $D^b(X)$ has a full strong exceptional sequence
\begin{equation*}
\mathfrak{E}=(\mathcal{O}_X(D_n),...,\mathcal{O}_X(D_1),\mathcal{O}_X(D_0))\,,
\end{equation*}
consisting of line bundles. Let $C_0\subset X$ be big, nef and isomorphic to $\mathbb{P}^1$, % and assume that the linear system $|C_0|$ has positive dimension, 
and assume that there exists a line bundle $\mathcal{L}\in\Pic X$ such that $\cc_1(\mathcal{L})\cdot C_0=1$. Suppose further that for any $i=0,...,n$ we have
\begin{equation*}
0<D_i\cdot C_0<-K_X\cdot C_0\,,
\end{equation*}
where $K_X$ is the canonical divisor on $X$. Then:
	\begin{enumerate}
	\item $\mathfrak{M}^\textup{t}_{X,C_0}(v)$ is isomorphic to an open substack $[U_v/G_{d^v}]\subset[X_{d^v,J}/G_{d^v}]=\mathfrak{M}_{Q,J}(d^v)$;
	\item $\M^\textup{fr}_{X,C_0}(v)$ is smooth and it is a free quotient $P_v/G_{d^v}$, where $P_v$ is a principal $\GL(\rk v,\mathbb{C})$-bundle over $U_v$ carrying a lift of the action $G_{d^v}\curvearrowright X_{d^v,J}$.
	\end{enumerate}
\end{theorem}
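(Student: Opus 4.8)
The plan is to deduce both statements from the derived equivalence $\Psi\colon D^b(X)\simeq D^b(Q;J)$ induced by $\mathfrak E$ (reviewed in \S\ref{Exc seq quiv mod}), by first upgrading it to a morphism of moduli stacks. Write $\mathcal T=\bigoplus_i\mathcal O_X(D_i)$ for the tilting bundle and $A=\End(\mathcal T)\cong\mathbb C Q/J$. For a flat family $\mathcal E_S$ over a scheme $S$ of torsion-free sheaves trivial on $C_0$ with invariants $v$, the relative transform $R(p_S)_*R\mathcal{H}om(p_X^*\mathcal T,\mathcal E_S)$ ($p_S,p_X$ the projections of $S\times X$) is a priori only a perfect complex of $A$-modules over $S$; but by Prop.\ \ref{Prop: van fr sh mov lin} one has $\Ext^j_X(\mathcal O_X(D_i),\mathcal E)=H^j(X,\mathcal E(-D_i))=0$ for $j\neq0$, all $i$, and every such $\mathcal E$, so by cohomology and base change this complex is a flat family of genuine $(Q,J)$-representations of dimension vector $d^v$. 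Being compatible with base change, the assignment defines a morphism $F\colon\mathfrak M^{\textup t}_{X,C_0}(v)\to\mathfrak M_{Q,J}(d^v)$ — the family version of the Beilinson-type degeneration of \S\ref{Background} — and I would prove it is an open immersion. It is a monomorphism, hence representable, because the relative tilting transform is an equivalence in families, so $F$ is fully faithful on $T$-points for all schemes $T$. It is formally \'etale: $\Psi$ being an exact equivalence, $R\Hom_X(\mathcal E,\mathcal E)\cong R\Hom_A(\Psi\mathcal E,\Psi\mathcal E)$ compatibly with the natural obstruction theories, so $F$ is an isomorphism on tangent spaces $\Ext^1$ and identifies the obstruction spaces $\Ext^2$ together with the obstruction maps, making every square-zero lifting problem along $F$ uniquely solvable. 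Since $\mathfrak M^{\textup t}_{X,C_0}(v)$ and $\mathfrak M_{Q,J}(d^v)$ are algebraic stacks of finite type, $F$ is \'etale; an \'etale monomorphism is an open immersion, and its image is an open substack of $[X_{d^v,J}/G_{d^v}]$, hence of the form $[U_v/G_{d^v}]$ for a unique $G_{d^v}$-invariant open $U_v\subset X_{d^v,J}$. This gives (1).

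For (2), I would first show $\mathfrak M^{\textup t}_{X,C_0}(v)$ is smooth by proving $\Ext^2_X(\mathcal E,\mathcal E)=0$ for every parameterized $\mathcal E$. By Serre duality this is $\Hom_X(\mathcal E,\mathcal E(K_X))=0$; given $f\colon\mathcal E\to\mathcal E(K_X)$, restrict to $C_0\cong\mathbb P^1$: since $\mathcal E|_{C_0}$ is trivial and $K_X\cdot C_0=-2-C_0^2<0$ (adjunction and bigness of $C_0$), $f|_{C_0}=0$, so $f$ factors through $\mathcal E(K_X-C_0)\hookrightarrow\mathcal E(K_X)$; iterating, with $(K_X-kC_0)\cdot C_0<0$ for all $k\geq0$, we get $f\in\Hom_X(\mathcal E,\mathcal E(K_X-kC_0))$ for all $k$, so $f=0$ by Krull's intersection theorem. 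As $\M^{\textup{fr}}_{X,C_0}(v)\to\mathfrak M^{\textup t}_{X,C_0}(v)$ is a principal $\GL(\rk v,\mathbb C)$-bundle (\S\ref{Framable shvs}), $\M^{\textup{fr}}_{X,C_0}(v)$ is then smooth as well; the same restriction-and-iteration argument also yields $\Ext^2_X(\mathcal E,\mathcal E(-C_0))=0$, the direct deformation-theoretic form of this smoothness. Finally, let $q\colon U_v\to[U_v/G_{d^v}]\xrightarrow{\ \sim\ }\mathfrak M^{\textup t}_{X,C_0}(v)$ be the $G_{d^v}$-torsor presenting the quotient stack, and put $P_v:=U_v\times_{\mathfrak M^{\textup t}_{X,C_0}(v)}\M^{\textup{fr}}_{X,C_0}(v)$. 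Then $P_v\to U_v$ is the pullback of the $\GL(\rk v,\mathbb C)$-bundle; the $G_{d^v}$-action on $U_v\subset X_{d^v,J}$ lifts to $P_v$ via the first factor and commutes with $\GL(\rk v,\mathbb C)$; and $P_v\to\M^{\textup{fr}}_{X,C_0}(v)$ is the pullback of $q$, hence a $G_{d^v}$-torsor, so $G_{d^v}$ acts freely on $P_v$ — equivalently, framed sheaves on the big nef curve $C_0$ have no nontrivial automorphisms (\S\ref{Mod fr shvs}) — and $\M^{\textup{fr}}_{X,C_0}(v)=P_v/G_{d^v}$, as asserted.

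The main obstacle is the construction and analysis of $F$ in the first paragraph: realizing $\Psi$ as a morphism of moduli stacks and showing it is \'etale. The first requires the relative Beilinson degeneration, i.e.\ the vanishing of the higher direct images in the relative tilting transform, which is precisely the content of Prop.\ \ref{Prop: van fr sh mov lin} and the source of the hypotheses $0<D_i\cdot C_0<-K_X\cdot C_0$; the second requires carrying the deformation--obstruction theory of $\mathfrak M^{\textup t}_{X,C_0}(v)$ across the derived equivalence and matching it on the nose with that of $\mathfrak M_{Q,J}(d^v)$. Once $F$ is known to be an open immersion, the smoothness computation and the construction of $P_v$ as a fibre product are comparatively routine.
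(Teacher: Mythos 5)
Your overall architecture matches the paper's (vanishing proposition $\Rightarrow$ embedding into $\mathfrak{M}_{Q,J}(d^v)$ $\Rightarrow$ principal-bundle structure and smoothness), but there is one concretely wrong step in the first paragraph. You claim that Prop.\ \ref{Prop: van fr sh mov lin} gives $H^j(X,\mathcal{E}(-D_i))=0$ for all $j\neq 0$, so that the relative tilting transform is concentrated in degree $0$. The Proposition only kills $H^0$ and $H^2$ (via $D_i\cdot C_0>0$ and $D_i\cdot C_0<-K_X\cdot C_0$ respectively); $H^1(X;\mathcal{E}(-D_i))$ does \emph{not} vanish --- it is precisely $d^v_i=-\chi(E_i,v)$, the dimension at the $i$th vertex. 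If $H^1$ also vanished you would get $d^v=0$. The transform $R(p_S)_*R\mathcal{H}om(p_X^*\mathcal{T},\mathcal{E}_S)$ is therefore concentrated in degree $1$, which is exactly why the paper's equivalence is $\Psi=R\Hom(T,\cdot)[1]$. This is a correctable off-by-one, but as written the cohomology-and-base-change step is based on a false vanishing and must be repaired (shift by $[1]$, or take $R^1$) before it produces a flat family of representations.

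Modulo that fix, your route to part (1) is genuinely different from the paper's and more labor-intensive: the paper does not verify ``\'etale monomorphism'' by hand. Instead it notes that the loci of objects lying in the intersection of the two hearts $\mathcal{C}$ and $\mathcal{K}$ form a \emph{common open substack} of $\mathfrak{M}_X(v)$ and $\mathfrak{M}_{Q,J}(d^v)$ --- openness follows from Toda's criterion once both hearts are checked to have generic flatness --- and then Cor.\ \ref{Cor: Mt emb in MQ} upgrades this to the full $\mathfrak{M}^\textup{t}_{X,C_0}(v)$ by showing every framable sheaf satisfies the vanishings \eqref{Eq: cohom van bel to K}, i.e.\ lies in $\mathcal{K}$. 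Your deformation-theoretic comparison of $\Ext^1$ and $\Ext^2$ is essentially re-deriving what the family equivalence $\Psi_S$ of \eqref{Eq: der eq families} already provides. For smoothness, the paper applies Prop.\ \ref{Prop: van fr sh mov lin} with $D=0$ to the torsion-free sheaf $\mathcal{E}^\vee\otimes\mathcal{E}$ (trivial on $C_0$) and uses the surjection $H^2(X;\mathcal{E}^\vee\otimes\mathcal{E})\to\Ext^2(\mathcal{E},\mathcal{E})$ (Remark \ref{Rmk: conseq vanish Prop}); your restrict-to-$C_0$-and-iterate argument with Krull intersection is a valid, self-contained alternative that avoids the moving-line lemma. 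The construction of $P_v$ as a fibre product with the lifted $G_{d^v}$-action agrees with Prop.\ \ref{Prop: Mfr prin bun} and Appendix \ref{App: prin bun on stack}.
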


Many rational surfaces possess strong exceptional sequences of line bundles. Using this fact, in \S\ref{Examples} we will apply the Theorem to concrete cases, obtaining examples of constructions of linear data for of $\M^\textup{fr}_{X,C_0}(v)$. At this point, this is reduced to checking that $X$ and $C_0$ satisfy the hypotheses of the Theorem, and to computing $(Q,J)$ and $d^v$.

\subsection{Notation and conventions on sheaves and quiver representations}\label{Notation}
\subsubsection{Schemes and stacks}
All schemes and algebraic stacks are understood to be locally of finite type over $\mathbb{C}$. A ``point'' $s\in S$ in a scheme $S$ is always a closed point.

\subsubsection{Coherent sheaves}
$\Coh_{\mathcal{O}_T}$ denotes the abelian category of coherent sheaves on a scheme $T$ and $D^b(T)$ is its bounded derived category.\\
$X$ will be a smooth projective irreducible complex surface; its tangent and cotangent bundles are denoted $\tau_X$ and $\Omhol_X$, and $K_X$ is the canonical divisor. $K_{\num}(X)$ is the numerical Grothendieck group of $\Coh_{\mathcal{O}_X}$, and given a class $v\in K_{\num}(X)$, we denote by $\mathfrak{M}_X(v)$ the algebraic $\mathbb{C}$-stack of coherent sheaves on $X$: its objects are flat families $\mathcal{F}\in\Coh_{\mathcal{O}_{X\times S}}$ over a scheme $S$ and for any point $s\in S$ we write $\mathcal{F}_s:=\iota_s^*\mathcal{F}$, where $\iota_s:X\to X\times S$ is the embedding $x\mapsto(x,s)$.

\subsubsection{Quiver representations}
$Q$ will always denote an oriented quiver, with vertices labelled by $I=\{0,...,n\}$, $\mathbb{C}Q$ its complex path algebra, $J\subset\mathbb{C}Q$ an ideal of relations (i.e.\ one generated by paths of lenght $\geq 2$). $\Rep^\textup{fd}_\mathbb{C}(Q;J)$ is the abelian category of finite-dimensional complex representations of $Q$, bound by the relations $J$, and $D^b(Q;J)$ is its bounded derived category. Given a dimension vector $d\in\mathbb{N}^I$, the affine space $R_d:=\oplus_{i\to j}\Hom_\mathbb{C}(\mathbb{C}^{d_i},\mathbb{C}^{d_j})$ parameterizes $d$-dimensional representations of $Q$, and the relations $J$ define a closed subscheme $X_{d,J}\subset R_d$. The group $G_d:=\prod_{i\in I}\GL(\mathbb{C},d_i)$ acts on $R_d$ by simultaneous conjugation, and preserves $X_{d,J}$. The orbits of this action correspond to isomorphism classes of $d$-dimensional representations. 

Finally, if $S$ is a scheme, we denote by $\Coh_{\mathcal{O}_S}^{(Q,J)}$ the abelian category of coherent $\mathcal{O}_S$-modules with a left action of $\mathbb{C}Q/J$. Recall that a flat family over $S$ of representations of $(Q,J)$ is defined as a locally free $\mathcal{V}\in\Coh_{\mathcal{O}_S}^{(Q,J)}$. Flat families of representations of fixed dimension vector $d\in\mathbb{N}^I$ make an algebraic $\mathbb{C}$-stack $\mathfrak{M}_{Q,J}(v)$ isomorphic to the quotient $[X_{d,J}/G_d]$.

%%%
%
%
\section{Moduli of framed sheaves}
\subsection{Framed sheaves and their moduli}\label{Mod fr shvs}
Fix a curve $C_0\subset X$.

\begin{definition}
A \emph{framed sheaf} is a pair $(\mathcal{E},\phi)$ consisting of a coherent torsion-free sheaf $\mathcal{E}$ on $X$ and an isomorphism $\phi:\mathcal{E}{\restriction_{C_0}}\to\mathcal{O}_{C_0}^{\oplus\rk\mathcal{E}}$, called a \emph{framing}.
\end{definition}

More generally people consider sheaves with an isomorphism $\phi$ to a fixed vector bundle $\mathcal{V}$ on $C_0$, but in this paper we will only stick to the case in which $\mathcal{V}$ is trivial.

\begin{definition}
A \emph{morphism} between two framed sheaves $(\mathcal{E},\phi)$ and $(\mathcal{E}',\phi')$ is a morphism $\xi:\mathcal{E}\to\mathcal{E}'$ such that $\phi'\circ\xi{\restriction_{C_0}}=\phi$.\footnote{Notice that a slightly different definition is also common: some people only require $\xi$ to satisfy $\phi'\circ\xi{\restriction_{C_0}}=\lambda\phi$ for some $\lambda\in\mathbb{C}$. This allows homoteties to be automorphisms of a framed sheaves, while under some conditions our definition forces the automorphism group of $(\mathcal{E},\phi)$ o be trivial (see also Remark \ref{Rmk: conseq vanish Prop}). Hence with our choice the stack $\mathfrak{M}_{X,C_0}^\textup{fr}(v)$ (and not just its moduli functor) is representable.}
\end{definition}

\begin{remark}
Let $(\mathcal{E},\phi)$ be a framed sheaf. Since $\mathcal{E}$ is torsion-free, its singular locus (i.e.\ where its stalks are not free modules, or equivalently where its fibers have dimension equal to $\rk\mathcal{E}$) consists of finitely many points, disjoint from $C_0$. This implies in particular that:
	\begin{enumerate}
	\item $\mathcal{E}^\vee$ and $\mathcal{E}^{\vee\vee}$ are also trivial on $C_0$;
	\item $\cc_1(\mathcal{E})\cdot C_0=0$.
	\end{enumerate}
\end{remark}

\begin{definition}
Fix a class $v\in K_{\num}(X)$, and let $r:=\rk v$. A \emph{flat family} of framed sheaves of class $v$ over a scheme $S$ is a couple $(\mathcal{F},\phi)$, where $\mathcal{F}\in\Coh_{\mathcal{O}_{X\times S}}$ is $S$-flat, each $\mathcal{F}_s$ is a torsion-free sheaf of class $v$ and $\phi:\mathcal{F}{\restriction_{C_0\times S}}\to\mathcal{O}_{C_0\times S}^{\oplus r}$ is an isomorphism. An isomorphism between flat families $(\mathcal{F},\phi)$ and $(\mathcal{F}',\phi')$ is an isomorphism $\xi:\mathcal{F}\to\mathcal{F}'$ such that $\phi'\circ\xi=\xi$, and the pullback of $(\mathcal{F},\phi)$ along a morphism $f:S'\to S$ is obtained in the obvious way (and as usual we will write $f^*\mathcal{F}$ in place of $(\Id_X\times f)^*\mathcal{F}$). Such flat families form a stack, which we denote by $\mathfrak{M}_{X,C_0}^\textup{fr}(v)$.
\end{definition}

Using the more general theory of \emph{framed modules} developed in \cite{HuyLeh95Framed}, Bruzzo and Markushevich proved:

\begin{theorem}\cite[Cor.\ 3.3]{BruMar11Mod}\label{Thm: BM}
If $C_0$ is a smooth, irreducible, big and nef curve, then $\mathfrak{M}_{X,C_0}^\textup{fr}(v)$ is representable by a quasi-projective scheme $\M_{X,C_0}^\textup{fr}(v)$.
\end{theorem}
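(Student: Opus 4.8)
The plan is to exhibit $\mathfrak M_{X,C_0}^\textup{fr}(v)$ as an open piece of a moduli space of \emph{framed modules} in the sense of Huybrechts--Lehn \cite{HuyLeh95Framed}, whose GIT construction then produces the quasi-projective scheme. Write $\iota\colon C_0\hookrightarrow X$ and fix the framing sheaf $\mathcal F:=\iota_*\mathcal O_{C_0}^{\oplus r}$, a pure one-dimensional coherent sheaf on $X$ (here $r=\rk v$; smoothness of $C_0$ makes $\mathcal F$ well-behaved, e.g.\ with the two-term locally free resolution $0\to\mathcal O_X(-C_0)\to\mathcal O_X\to\iota_*\mathcal O_{C_0}\to0$). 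To a framed sheaf $(\mathcal E,\phi)$ attach the framed module $(\mathcal E,\alpha_\phi)$, where $\alpha_\phi$ is the composition $\mathcal E\twoheadrightarrow\mathcal E{\restriction_{C_0}}\xrightarrow{\ \phi\ }\mathcal O_{C_0}^{\oplus r}=\mathcal F$; since $\phi$ is an isomorphism, $\alpha_\phi$ is surjective onto $\mathcal F$ with kernel $\mathcal E(-C_0)$, and both $(\mathcal E,\phi)$ and $\phi$ are recovered from $(\mathcal E,\alpha_\phi)$. Carried out in families this defines a monomorphism of stacks from $\mathfrak M^\textup{fr}_{X,C_0}(v)$ to the stack of framed modules with Hilbert polynomial $P$ (fixed by $v$ and a chosen polarization $H$) and framing sheaf $\mathcal F$.

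Next I would fix the Huybrechts--Lehn stability polynomial $\delta$ with leading coefficient $\delta_1>0$ and argue: (a) \emph{every} $(\mathcal E,\alpha_\phi)$ arising from a genuine framed sheaf of class $v$ is $\delta$-stable, provided $\delta_1$ is large enough in terms of $v$ and $C_0$; and (b) the locus of such framed modules is open among all $\delta$-stable framed modules with the given numerics. For (a): bigness of $C_0$ is used to bound the relevant family of torsion-free $\mathcal E$ of class $v$ trivial on $C_0$ (this is the sense in which $\mathcal F$ is a ``good framing sheaf''), so that the reduced Hilbert polynomials of all saturated subsheaves $\mathcal E'\subsetneq\mathcal E$ are controlled uniformly in $v$; nefness of $C_0$ then ensures that when $\alpha_\phi{\restriction_{\mathcal E'}}=0$, i.e.\ $\mathcal E'\subseteq\mathcal E(-C_0)$, the reduced Hilbert polynomial of $\mathcal E'$ drops below that of $\mathcal E$ by a uniform positive amount and the $\delta$-term can only help; while if $\alpha_\phi{\restriction_{\mathcal E'}}\neq0$, its image is a nonzero subsheaf of the one-dimensional sheaf $\mathcal F$, so the negative $\delta$-penalty attached to the subobject dominates once $\delta_1\gg0$. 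A symmetric estimate handles quotients. For (b): torsion-freeness of $\mathcal E$ is open, surjectivity of $\alpha$ onto $\mathcal F$ is open, and for a surjective $\alpha$ a dimension count along $C_0$ forces the induced $\alpha{\restriction_{C_0}}\colon\mathcal E{\restriction_{C_0}}\to\mathcal O_{C_0}^{\oplus r}$ to be an isomorphism, hence a framing.

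Finally I would invoke the Huybrechts--Lehn construction \cite{HuyLeh95Framed}: with $\delta$ chosen as above, the moduli functor of $\delta$-stable framed modules with Hilbert polynomial $P$ and framing sheaf $\mathcal F$ is a fine moduli space, realized as a geometric quotient by $\GL$ of a locally closed subscheme of a suitable $\Quot$-scheme (parameterizing quotients of $\mathcal O_X(-mH)^{\oplus P(m)}$ for $m\gg0$, so that all members are $m$-regular), hence a quasi-projective scheme; in particular such framed modules have trivial automorphism group, the map to the \emph{fixed} sheaf $\mathcal F$ forbidding even rescalings. Combined with the previous step, $(\mathcal E,\phi)\mapsto(\mathcal E,\alpha_\phi)$ identifies $\mathfrak M^\textup{fr}_{X,C_0}(v)$ with an open subscheme of this moduli space, which is therefore a quasi-projective scheme $\M^\textup{fr}_{X,C_0}(v)$.

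The principal difficulty is step (a): one must produce a \emph{single} $\delta$ that simultaneously stabilizes all framed sheaves of class $v$, which requires first establishing boundedness of the relevant family of torsion-free sheaves and then a uniform estimate, over that family, on the reduced Hilbert polynomials of all saturated subsheaves and of their $(-C_0)$-twists. This is exactly where the hypotheses that $C_0$ be big and nef enter, and it is the technical core of the argument; by contrast, transporting openness and the absence of automorphisms through the comparison of the first paragraph is comparatively formal.
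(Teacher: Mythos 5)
The paper offers no proof of this statement: it is imported verbatim from \cite{BruMar11Mod}, so the only possible comparison is with the argument given there. Your outline reconstructs exactly that argument -- pass to Huybrechts--Lehn framed modules with framing sheaf $\iota_*\mathcal{O}_{C_0}^{\oplus r}$, show that $\delta$-(semi)stability is automatic for the framed modules arising from genuine framed sheaves, and realize $\mathfrak{M}^{\textup{fr}}_{X,C_0}(v)$ as an open subscheme of the resulting fine quasi-projective moduli space -- so the architecture is the right one.

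There is, however, a concrete error in your step (a), in the direction of the dependence on $\delta$. In the Huybrechts--Lehn convention, $(\mathcal{E},\alpha)$ is $\delta$-stable iff for every subsheaf $0\neq\mathcal{E}'\subsetneq\mathcal{E}$ of rank $r'>0$ one has
\begin{equation*}
\frac{P_{\mathcal{E}'}-\epsilon(\mathcal{E}')\,\delta}{r'}<\frac{P_{\mathcal{E}}-\delta}{r}\,,\qquad \epsilon(\mathcal{E}')=1 \iff \alpha{\restriction_{\mathcal{E}'}}\neq 0\,.
\end{equation*}
For $\mathcal{E}'\subseteq\ker\alpha_\phi=\mathcal{E}(-C_0)$ this reads $P_{\mathcal{E}'}/r'<P_{\mathcal{E}}/r-\delta/r$: the $\delta$-term does \emph{not} ``help'' here, it is an obstruction, and it imposes an \emph{upper} bound on $\delta$. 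Indeed, taking $\mathcal{E}'=\mathcal{E}(-C_0)$ itself (rank $r$, $\epsilon=0$) forces $\delta<P_{\mathcal{E}}-P_{\mathcal{E}(-C_0)}$, whose leading coefficient is $r\,(C_0\cdot H)$. Hence the prescription ``$\delta_1\gg 0$'' fails outright: $\delta_1$ must be chosen in a window $(\delta_{\min},\delta_{\max})$, with $\delta_{\min}$ coming from the subsheaves of smaller rank not killed by $\alpha_\phi$ (via boundedness and a Grothendieck-type bound on saturated subsheaves) and $\delta_{\max}$ from the subsheaves contained in $\mathcal{E}(-C_0)$. The real content of the cited theorem is precisely that this window is nonempty; this is where one uses that $C_0$ is big and nef (so $C_0^2>0$) together with the fact that $\mathcal{O}_{C_0}^{\oplus r}$ is a ``good framing sheaf'' (every quotient bundle of a trivial bundle on a curve has nonnegative degree), which controls $\cc_1(\mathcal{E}')\cdot C_0$ for saturated $\mathcal{E}'\subset\mathcal{E}$ and hence makes $\delta_{\min}<\delta_{\max}$ achievable. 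Your closing paragraph correctly identifies the uniform estimate on subsheaves and their $(-C_0)$-twists as the technical core, but the explicit $\delta$-analysis preceding it contradicts that and, as written, would not close.
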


\subsection{Description of \texorpdfstring{$\mathfrak{M}_{X,C_0}^\textup{fr}(v)$}{Mfr(v)} via framable sheaves}\label{Framable shvs}
Now on we will focus on the case in which $C_0\simeq\mathbb{P}^1$. We fix $v\in K_{\num}(X)$ and denote $r:=\rk v$.

In this subsection, we will describe the stack $\mathfrak{M}_{X,C_0}^\textup{fr}(v)$ of framed sheaves via another stack, parameterizing sheaves which are trivial on $C_0$, but without the choice of a framing $\phi$:

\begin{definition}
The moduli stack $\mathfrak{M}_{X,C_0}^\textup{t}(v)$ is the substack of $\mathfrak{M}_X(v)$ whose objects over a scheme $S$ are $S$-flat sheaves $\mathcal{F}\in\Coh_{\mathcal{O}_{X\times S}}$ such that, for all $s\in S$, $\mathcal{F}_s$ is a torsion-free sheaf of class $v$, whose restriction $\mathcal{F}_s{\restriction_{C_0}}$ is trivial. We will call this the stack of \emph{framable sheaves}.
\end{definition}

\begin{remark}
Notice that if $\mathcal{F}\in\Coh_{\mathcal{O}_{X\times S}}$ is a flat family of framable sheaves, then the restriction $\mathcal{F}{\restriction_{C_0\times S}}$ is a locally free sheaf which is not necessarily trivial. Thus we cannot obtain a family of \emph{framed} sheaves simply adding to $\mathcal{F}$ the choice of a framing. However, we will see in a while that it is still possible to construct canonically a family of framed sheaves out of $\mathcal{F}$.
\end{remark}

\begin{lemma}\label{Lem: triv on P1 open cond}
$\mathfrak{M}_{X,C_0}^\textup{t}(v)\subset\mathfrak{M}_X(v)$ is an open substack.
\end{lemma}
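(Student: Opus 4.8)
The plan is to show that the condition ``$\mathcal{F}_s{\restriction_{C_0}}$ is trivial'' is open in the base $S$ for an $S$-flat family $\mathcal{F}$ of torsion-free sheaves of class $v$, since by construction $\mathfrak{M}_{X,C_0}^\textup{t}(v)$ is the subfunctor of $\mathfrak{M}_X(v)$ cut out by this condition (the torsion-freeness of each fiber being already guaranteed for objects of $\mathfrak{M}_X(v)$ of class $v$, or at worst another open condition which can be invoked from the literature). Concretely, given such a family $\mathcal{F}$ over $S$, I would restrict to $C_0\times S$ and note that $\mathcal{F}{\restriction_{C_0\times S}}$ is $S$-flat and fiberwise torsion-free on $C_0\simeq\mathbb{P}^1$, hence fiberwise locally free of rank $r=\rk v$; in particular it is a flat family of rank-$r$ vector bundles on $\mathbb{P}^1$. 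The claim then reduces to the statement that, in a flat family of vector bundles on $\mathbb{P}^1$, the locus where the fiber is the trivial bundle $\mathcal{O}_{\mathbb{P}^1}^{\oplus r}$ is open.

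For this last point I would use the standard semicontinuity machinery. The splitting type of a bundle $\mathcal{G}$ on $\mathbb{P}^1$ is determined by the cohomology $h^1(\mathbb{P}^1,\mathcal{G}(m))$ (or equivalently $h^0(\mathbb{P}^1,\mathcal{G}(m))$) for $m$ in a suitable finite range: $\mathcal{G}\simeq\mathcal{O}_{\mathbb{P}^1}^{\oplus r}$ if and only if $h^1(\mathbb{P}^1,\mathcal{G}(-1))=0$ and $h^0(\mathbb{P}^1,\mathcal{G}(-1))=0$, equivalently $\chi=0$ together with $h^1(\mathbb{P}^1,\mathcal{G}(-1))=0$ — one checks directly that among bundles of degree $0$ and rank $r$ on $\mathbb{P}^1$ the trivial one is exactly the one with $h^1(\mathcal{G}(-1))=0$, because any summand $\mathcal{O}(a)$ with $a\geq 1$ forces some $\mathcal{O}(b)$ with $b\leq -1$ and hence $h^1(\mathcal{O}(b-1))\neq 0$. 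Now apply this to $\mathcal{G}=\mathcal{F}{\restriction_{C_0\times S}}$ twisted by $\mathcal{O}_{C_0}(-1)\boxtimes\mathcal{O}_S$: by Grauert/cohomology-and-base-change the function $s\mapsto h^1(C_0,\mathcal{F}_s{\restriction_{C_0}}(-1))$ is upper semicontinuous on $S$, so its vanishing locus is open. Intersecting with $S$ (where $\chi$ is already constant $=0$ since $\cc_1(\mathcal{F}_s)\cdot C_0=0$ by the Remark in \S\ref{Mod fr shvs} and flatness keeps it constant) gives precisely the locus of $s$ with $\mathcal{F}_s{\restriction_{C_0}}$ trivial, which is therefore open.

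Hence the condition defining $\mathfrak{M}_{X,C_0}^\textup{t}(v)$ inside $\mathfrak{M}_X(v)$ is open on every base, which by definition of the topology on an algebraic stack means $\mathfrak{M}_{X,C_0}^\textup{t}(v)\subset\mathfrak{M}_X(v)$ is an open substack. The only mild subtlety — the ``main obstacle'', such as it is — is making sure that restriction to $C_0\times S$ behaves well in families: one needs that $C_0\times S\hookrightarrow X\times S$ is a relative Cartier divisor and that $\mathcal{F}$ is flat over $C_0$, which follows because $\mathcal{F}$ is $S$-flat with torsion-free fibers disjoint (in their singular locus) from $C_0$, so that $\operatorname{Tor}_1(\mathcal{F}_s,\mathcal{O}_{C_0})=0$ for all $s$ and $\mathcal{F}{\restriction_{C_0\times S}}$ is itself $S$-flat; this lets the base-change theorems apply verbatim. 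Everything else is bookkeeping.
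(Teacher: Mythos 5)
Your proposal is correct and follows essentially the same route as the paper: reduce to openness of torsion-freeness and of local freeness near $C_0$, restrict to an $S$-flat family of degree-zero bundles on $C_0\simeq\mathbb{P}^1$, and apply semicontinuity to a cohomological characterization of triviality (the paper uses $h^0(\mathcal{G}(-1))=0$ where you use $h^1(\mathcal{G}(-1))=0$, which is equivalent here since $\chi(\mathcal{G}(-1))=0$). Your extra care about flatness of the restriction via the Tor-vanishing along $C_0$ is a welcome detail the paper leaves implicit.
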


In particular (see e.g.\ \cite[Prop.\ 4.5 and Thm.\ 4.6.2.1]{LauMor00Champs}) this tells us that $\mathfrak{M}_{X,C_0}^\textup{t}(v)$ is an algebraic $\mathbb{C}$-stack locally of finite type, because so is $\mathfrak{M}_X(v)$.

\begin{proof}
Being torsion-free is an open condition by \cite[Prop. 2.1]{Maruy76Openn}; being locally free on $C_0$ (which is equivalent to being locally free in some open neighbourhood of $C_0$) is also an open condition. So without loss of generality we can take an $S$-flat family $\mathcal{F}\in\Coh_{\mathcal{O}_{X\times S}}$ (of torsion-free sheaves) which is locally free on $C_0\times S$, i.e.\ which restricts to an $S$-flat family $\tilde{\mathcal{F}}$ of vector bundles on $\mathbb{P}^1$. Now we claim that the set
\begin{equation*}
U:=\{s\in S\ |\ \tilde{\mathcal{F}}_s\textup{ is trivial}\}
\end{equation*}
is open: indeed, if this set is nonempty, then all the vector bundles of the family have degree $\cc_1(v)\cdot C_0=0$, and for a degree zero vector bundle $\mathcal{E}$ on $\mathbb{P}^1$ being trivial is equivalent to satisfying $h^0(\mathbb{P}^1,\mathcal{E}(-1))=0$. So $U$ is open by the semicontinuity theorem.
\end{proof}

Now we explain the relation between $\mathfrak{M}_{X,C_0}^\textup{t}(v)$ and $\mathfrak{M}_{X,C_0}^\textup{fr}(v)$: we have a morphism
\begin{equation*}
\mathfrak{M}_{X,C_0}^\textup{fr}(v)\to\mathfrak{M}_{X,C_0}^\textup{t}(v)
\end{equation*}
forgetting the framing, whose fiber over a point $[\mathcal{E}]$ consists of all the possible framings $\phi\in\Hom(\mathcal{E}{\restriction_{C_0}},\mathcal{O}_X^{\oplus r})\simeq\GL(r,\mathbb{C})$. In the remainder of this subsection we will use the formalism discussed in the Appendix to argue that, as the intuition suggests, this morphism is a principal $\GL(r,\mathbb{C})$-bundle.

This fact is due to the following construction (which was essentially carried out in a special case in \cite[\S4.2]{BaBrRa15Monad}): given a scheme $S$ and family $\mathcal{F}\in\mathfrak{M}_{X,C_0}^\textup{t}(v)$ over $S$, we are going to build a principal $\GL(r,\mathbb{C})$-bundle $p:P_\mathcal{F}\to S$ and make the pullback $p^*\mathcal{F}$ naturally into a family of \emph{framed} sheaves. As already observed, the restriction $\mathcal{F}{\restriction_{C_0\times S}}$ is a (not necessarily trivial) vector bundle, trivial on each fiber of the projection $\pr_S$ onto $S$. Consider its pushforward
\begin{equation*}
\mathcal{V}_\mathcal{F}:=(\pr_S)_*\mathcal{F}{\restriction_{C_0\times S}}\,.
\end{equation*}

\begin{remark}\label{Rmk: cbc for VF}
For any $s\in S$, $h^0(C_0;\mathcal{F}_s{\restriction_{C_0}})=r$ and $h^i(C_0;\mathcal{F}_s{\restriction_{C_0}})=0$ for $i\neq 0$. Hence, by cohomology and base change we see that (having used, in particular, \cite[Cor.\ 7.9.9]{Groth63EGAIII2}):
	\begin{enumerate}
	\item $\mathcal{V}_\mathcal{F}$ is a vector bundle on $S$, whose fiber over a point $s\in S$ is the space $H^0(C_0;\mathcal{F}_s{\restriction_{C_0}})\simeq\mathbb{C}^r$;
	\item the formation of $\mathcal{V}_\mathcal{F}$ commutes with base change: given a morphism $f:S'\to S$, we have $\mathcal{V}_{f^*\mathcal{F}}\cong f^*\mathcal{V}_\mathcal{F}$.
	%(see e.g. \href{http://virtualmath1.stanford.edu/~conrad/248BPage/handouts/cohom.pdf}{link}, Prop. 2.1)
	\end{enumerate}
\end{remark}

Now denote by
\begin{equation*}
p:P_\mathcal{F}\to S
\end{equation*}
the frame bundle of $\mathcal{V}_\mathcal{F}$: this is a principal $\GL(r,\mathbb{C})$-bundle whose fiber over $s\in S$ is the set $\Iso(\mathbb{C}^r,H^0(C_0;\mathcal{F}_s{\restriction_{C_0}}))\cong\Iso(\mathcal{O}_{C_0}^{\oplus r},\mathcal{F}_s{\restriction_{C_0}})$.
%Moreover, the previous Lemma says that a global section of $p$ is the same as a framing $\mathcal{F}{\restriction_{C_0\times S}}\overset{\simeq}{\to}\mathcal{O}_{C_0\times S}^{\oplus r}$. 

The reason we are interested in this principal bundle is that its sections describe framings of the family $\mathcal{F}$:

\begin{lemma}\label{Lem: triv bun of fr}
$P_\mathcal{F}$ (or $\mathcal{V}_\mathcal{F}$) is trivial if and only if $\mathcal{F}{\restriction_{C_0\times S}}$ is trivial. In this case, global sections of $p$ are identified with framings $\mathcal{F}{\restriction_{C_0\times S}}\to\mathcal{O}_{C_0\times S}^{\oplus r}$ via the canonical isomorphism $\Hom(\mathcal{O}_S^{\oplus r},\mathcal{V}_\mathcal{F})\cong\Hom(\mathcal{O}_{C_0\times S}^{\oplus r},\mathcal{F}{\restriction_{C_0\times S}})$.
\end{lemma}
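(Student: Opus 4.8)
The plan is to unwind the definition of the frame bundle $P_\mathcal{F}$ and use the base-change property of $\mathcal{V}_\mathcal{F}$ recorded in Remark \ref{Rmk: cbc for VF}. First I would recall that, by construction, $P_\mathcal{F}$ is the frame bundle of the rank-$r$ vector bundle $\mathcal{V}_\mathcal{F}=(\pr_S)_*\mathcal{F}{\restriction_{C_0\times S}}$ on $S$; since for any vector bundle the frame bundle is trivial as a principal $\GL(r,\mathbb{C})$-bundle if and only if the vector bundle is trivial, the two triviality statements for $P_\mathcal{F}$ and $\mathcal{V}_\mathcal{F}$ are equivalent and I may work with $\mathcal{V}_\mathcal{F}$ throughout.

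Next I would establish the equivalence ``$\mathcal{V}_\mathcal{F}$ trivial $\iff$ $\mathcal{F}{\restriction_{C_0\times S}}$ trivial''. One direction is immediate: if $\mathcal{F}{\restriction_{C_0\times S}}\cong\mathcal{O}_{C_0\times S}^{\oplus r}$, then applying $(\pr_S)_*$ and using $(\pr_S)_*\mathcal{O}_{C_0\times S}\cong\mathcal{O}_S$ (as $C_0\cong\mathbb{P}^1$ and $\pr_S$ is proper with connected fibers, or just by flat base change from $H^0(\mathbb{P}^1,\mathcal{O})=\mathbb{C}$) gives $\mathcal{V}_\mathcal{F}\cong\mathcal{O}_S^{\oplus r}$. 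For the converse, the key is the natural evaluation map of $\mathcal{O}_{C_0\times S}$-modules
\begin{equation*}
\varepsilon:\pr_S^*\mathcal{V}_\mathcal{F}=\pr_S^*(\pr_S)_*\mathcal{F}{\restriction_{C_0\times S}}\longrightarrow\mathcal{F}{\restriction_{C_0\times S}}\,.
\end{equation*}
I would check that $\varepsilon$ is an isomorphism: this can be verified fiberwise over points $s\in S$ (both sides are $S$-flat, being vector bundles on $C_0\times S$), where it becomes the evaluation map $H^0(C_0;\mathcal{F}_s{\restriction_{C_0}})\otimes\mathcal{O}_{C_0}\to\mathcal{F}_s{\restriction_{C_0}}$; this is an isomorphism precisely because $\mathcal{F}_s{\restriction_{C_0}}$ is the trivial bundle $\mathcal{O}_{C_0}^{\oplus r}$ (globally generated with $h^0=r=\rk$), which holds by definition of $\mathfrak{M}_{X,C_0}^\textup{t}(v)$. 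Granting that $\varepsilon$ is an isomorphism, if $\mathcal{V}_\mathcal{F}$ is trivial then $\mathcal{F}{\restriction_{C_0\times S}}\cong\pr_S^*\mathcal{V}_\mathcal{F}\cong\pr_S^*\mathcal{O}_S^{\oplus r}=\mathcal{O}_{C_0\times S}^{\oplus r}$ is trivial as well.

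Finally, for the last assertion, I would produce the canonical identification $\Hom_{\mathcal{O}_S}(\mathcal{O}_S^{\oplus r},\mathcal{V}_\mathcal{F})\cong\Hom_{\mathcal{O}_{C_0\times S}}(\mathcal{O}_{C_0\times S}^{\oplus r},\mathcal{F}{\restriction_{C_0\times S}})$ from the adjunction $\pr_S^*\dashv(\pr_S)_*$ together with the projection formula: $\Hom_{\mathcal{O}_{C_0\times S}}(\pr_S^*\mathcal{O}_S^{\oplus r},\mathcal{F}{\restriction_{C_0\times S}})\cong\Hom_{\mathcal{O}_S}(\mathcal{O}_S^{\oplus r},(\pr_S)_*\mathcal{F}{\restriction_{C_0\times S}})=\Hom_{\mathcal{O}_S}(\mathcal{O}_S^{\oplus r},\mathcal{V}_\mathcal{F})$; under this, an isomorphism $\mathcal{O}_S^{\oplus r}\to\mathcal{V}_\mathcal{F}$ corresponds, after composing with $\varepsilon$, to an isomorphism $\mathcal{O}_{C_0\times S}^{\oplus r}\to\mathcal{F}{\restriction_{C_0\times S}}$, i.e.\ (inverting) to a framing. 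A global section of $p:P_\mathcal{F}\to S$ is by definition a global frame of $\mathcal{V}_\mathcal{F}$, i.e.\ precisely such an isomorphism $\mathcal{O}_S^{\oplus r}\xrightarrow{\sim}\mathcal{V}_\mathcal{F}$, so sections of $p$ match framings of $\mathcal{F}{\restriction_{C_0\times S}}$ bijectively. I expect the only real point requiring care to be the verification that the evaluation map $\varepsilon$ is an isomorphism — everything else is formal adjunction nonsense — and this reduces, via $S$-flatness and checking on fibers, to the elementary fact that a degree-zero vector bundle on $\mathbb{P}^1$ with $h^0$ equal to its rank is trivial, which is already implicit in the setup.
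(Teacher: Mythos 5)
Your argument is correct and is essentially the paper's own proof: the easy direction via $(\pr_S)_*\mathcal{O}_{C_0\times S}\cong\mathcal{O}_S$, and the converse by observing that sections of $\mathcal{V}_\mathcal{F}$ yield pointwise-independent sections of $\mathcal{F}{\restriction_{C_0\times S}}$ thanks to fiberwise triviality --- your evaluation map $\varepsilon$ is just a cleaner packaging of that step, with the base-change input from Remark \ref{Rmk: cbc for VF} correctly invoked. No gaps.
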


\begin{proof}
Since $\pr_S$ is projective and has connected fibers, we have $(\pr_S)_*\mathcal{O}_{C_0\times S}^{\oplus r}\simeq\mathcal{O}_S^{\oplus r}$. Conversely, if $\mathcal{V}_\mathcal{F}$ is trivial, then we can take $r$ linearly independent sections (which are the same as a global section of $P_\mathcal{F}$): by triviality of $\mathcal{F}{\restriction_{C_0\times S}}$ on the fibers of $\pr_S$, these give $r$ pointwise independent sections of $\mathcal{F}{\restriction_{C_0\times S}}$, i.e.\ the inverse map of a framing $\mathcal{F}{\restriction_{C_0\times S}}\to\mathcal{O}_{C_0\times S}^{\oplus r}$.
\end{proof}

Now, for each family $\mathcal{F}\in\mathfrak{M}_{X,C_0}^\textup{t}(v)$ over a scheme $S$ we have constructed in a canonical way a principal $\GL(r,\mathbb{C})$-bundle $P_\mathcal{F}\to S$. Moreover, Remark \ref{Rmk: cbc for VF} says that this construction is compatible with pullbacks, so we have realized a functor
\begin{equation*}
\Phi:\mathfrak{M}_{X,C_0}^\textup{t}(v)\longrightarrow B\GL(r,\mathbb{C})\,,
\end{equation*}
i.e.\ a principal $\GL(r,\mathbb{C})$-bundle on $\mathfrak{M}_{X,C_0}^\textup{t}(v)$ (Def.\ \ref{Defn: prin G-bun stk}). Now, we can identify the total space $\mathfrak{P}$ of $\Phi$ (Def.\ \ref{Defn: tot sp pr bun stk}) with the stack $\mathfrak{M}_{X,C_0}^\textup{fr}(v)$: by definition, an object of $\mathfrak{P}$ over a scheme $S$ consists of a family $\mathcal{F}\in\mathfrak{M}_{X,C_0}^\textup{t}(v)$ over $S$ together with a section of the bundle $p:P_\mathcal{F}\to S$. By Lemma \ref{Lem: triv bun of fr}, such a section corresponds to a framing $\phi:\mathcal{F}\overset{\simeq}{\to}\mathcal{O}_{C_0\times S}^{\oplus r}$. Clearly this correspondence is functorial and compatible with pulling back families, and thus we have deduced:

\begin{proposition}\label{Prop: Mfr prin bun}
$\Phi$ is a principal $\GL(r,\mathbb{C})$-bundle, whose total space is identified with the forgetful morphism $\mathfrak{M}_{X,C_0}^\textup{fr}(v)\to\mathfrak{M}_{X,C_0}^\textup{t}(v)$. In particular, $\mathfrak{M}_{X,C_0}^\textup{fr}(v)$ is an algebraic $\mathbb{K}$-stack locally of finite type.
\end{proposition}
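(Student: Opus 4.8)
The plan is to verify directly that the functor $\Phi$ built above satisfies the definition of a principal $\GL(r,\mathbb{C})$-bundle on the stack $\mathfrak{M}_{X,C_0}^\textup{t}(v)$ (Def.\ \ref{Defn: prin G-bun stk}), and then to exhibit a canonical equivalence between its total space $\mathfrak{P}$ (Def.\ \ref{Defn: tot sp pr bun stk}) and the stack $\mathfrak{M}_{X,C_0}^\textup{fr}(v)$, compatibly with the two projections to $\mathfrak{M}_{X,C_0}^\textup{t}(v)$. The last sentence (algebraicity and locally-of-finite-type) is then immediate: the total space of a principal bundle over an algebraic stack locally of finite type is again such, and $\mathfrak{M}_{X,C_0}^\textup{t}(v)$ was shown to be of this kind in Lemma \ref{Lem: triv on P1 open cond}.

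First I would check that $\Phi$ is a well-defined morphism to $B\GL(r,\mathbb{C})$: to each $S$-point $\mathcal{F}$ of $\mathfrak{M}_{X,C_0}^\textup{t}(v)$ we assign the frame bundle $P_\mathcal{F}$ of $\mathcal{V}_\mathcal{F}=(\pr_S)_*(\mathcal{F}{\restriction_{C_0\times S}})$, which is a locally free sheaf of rank $r$ on $S$ by Remark \ref{Rmk: cbc for VF}(1); the compatibility with pullbacks is exactly Remark \ref{Rmk: cbc for VF}(2), i.e.\ the base-change isomorphism $\mathcal{V}_{f^*\mathcal{F}}\cong f^*\mathcal{V}_\mathcal{F}$, which induces a canonical isomorphism of frame bundles $P_{f^*\mathcal{F}}\cong f^*P_\mathcal{F}$. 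Checking the cocycle condition for these base-change isomorphisms under a composition $S''\to S'\to S$ is routine (it reduces to the functoriality of pushforward and the uniqueness part of cohomology-and-base-change), and it makes $\Phi$ a genuine pseudofunctor $\mathfrak{M}_{X,C_0}^\textup{t}(v)\to B\GL(r,\mathbb{C})$, that is, a principal $\GL(r,\mathbb{C})$-bundle on the stack.

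Next I would identify the total space. By definition, an $S$-point of $\mathfrak{P}$ is a pair consisting of an $S$-point $\mathcal{F}$ of $\mathfrak{M}_{X,C_0}^\textup{t}(v)$ and a section $\sigma:S\to P_\mathcal{F}$ of $p$. Restricting the whole discussion to a fixed $S$, we may freely pull $\mathcal{F}$ back to $P_\mathcal{F}$ itself so that $\mathcal{V}_\mathcal{F}$ becomes trivial (its frame bundle acquires a tautological section); then Lemma \ref{Lem: triv bun of fr} applies and tells us that a section $\sigma$ of $p$ is the same datum as an isomorphism $\mathcal{V}_\mathcal{F}\cong\mathcal{O}_S^{\oplus r}$, which under the canonical isomorphism $\Hom(\mathcal{O}_S^{\oplus r},\mathcal{V}_\mathcal{F})\cong\Hom(\mathcal{O}_{C_0\times S}^{\oplus r},\mathcal{F}{\restriction_{C_0\times S}})$ (adjunction between $(\pr_S)^*$ and $(\pr_S)_*$, together with $(\pr_S)_*\mathcal{O}_{C_0\times S}^{\oplus r}\simeq\mathcal{O}_S^{\oplus r}$ for $\pr_S$ projective with connected fibres) corresponds to a framing $\phi:\mathcal{F}{\restriction_{C_0\times S}}\overset{\sim}{\to}\mathcal{O}_{C_0\times S}^{\oplus r}$. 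Thus the pair $(\mathcal{F},\sigma)$ corresponds to the framed family $(\mathcal{F},\phi)$, i.e.\ to an $S$-point of $\mathfrak{M}_{X,C_0}^\textup{fr}(v)$; conversely a framing $\phi$ produces, via the same canonical isomorphism, a trivialization of $\mathcal{V}_\mathcal{F}$ and hence a section of $p$. This assignment is visibly inverse-to-one-another on objects and sends isomorphisms of framed families to isomorphisms of pairs (an isomorphism $\xi:\mathcal{F}\to\mathcal{F}'$ with $\phi'\circ\xi{\restriction_{C_0\times S}}=\phi$ induces $(\pr_S)_*$ of the restriction, which matches the induced map on frame bundles), so it is an equivalence of stacks; and it commutes with pulling back families and with the projections forgetting $\phi$ resp.\ $\sigma$, by the base-change compatibility established above.

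The main obstacle is bookkeeping rather than conceptual: one must be careful that all the identifications — the base-change isomorphism for $\mathcal{V}_\mathcal{F}$, the adjunction isomorphism $\Hom(\mathcal{O}_S^{\oplus r},\mathcal{V}_\mathcal{F})\cong\Hom(\mathcal{O}_{C_0\times S}^{\oplus r},\mathcal{F}{\restriction_{C_0\times S}})$, and the identification of sections of $p$ with trivializations of $\mathcal{V}_\mathcal{F}$ — are \emph{canonical} and mutually compatible, so that they assemble into a $2$-commutative diagram of stacks rather than merely a fibrewise bijection. Since each of these isomorphisms is the standard one and each is compatible with arbitrary base change (Remark \ref{Rmk: cbc for VF}), the verification of the cocycle/compatibility conditions is a formal diagram chase, and no genuinely new input beyond Lemmas \ref{Lem: triv on P1 open cond}, \ref{Lem: triv bun of fr} and Remark \ref{Rmk: cbc for VF} is required. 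Once the equivalence $\mathfrak{P}\simeq\mathfrak{M}_{X,C_0}^\textup{fr}(v)$ over $\mathfrak{M}_{X,C_0}^\textup{t}(v)$ is in place, the final assertion follows because $B\GL(r,\mathbb{C})$ is algebraic and the total space of a principal bundle over an algebraic stack locally of finite type is again algebraic and locally of finite type.
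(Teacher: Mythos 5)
Your proposal is correct and follows essentially the same route as the paper: the frame bundle of $\mathcal{V}_\mathcal{F}$ together with the base-change compatibility of Remark \ref{Rmk: cbc for VF} defines $\Phi$, Lemma \ref{Lem: triv bun of fr} converts sections of $P_\mathcal{F}\to S$ into framings to identify the total space with $\mathfrak{M}_{X,C_0}^\textup{fr}(v)$, and algebraicity follows from the general statement (Lemma \ref{Lem: pullb fam prin bun stk}) about total spaces of principal bundles over algebraic stacks. The only superfluous step is the detour of pulling $\mathcal{F}$ back to $P_\mathcal{F}$: a section $\sigma$ of $p$ over $S$ already trivializes $P_\mathcal{F}$, so the lemma applies directly on $S$.
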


Concretely, this means that a family $\mathcal{F}\in\mathfrak{M}_{X,C_0}^\textup{t}(v)$ over a scheme $S$ induces a family of framed sheaves by pulling back along the forgetful morphism: if $\alpha_\mathcal{F}:S\to\mathfrak{M}_{X,C_0}^\textup{t}(v)$ is the morphism corresponding to $\mathcal{F}$, then we have a fiber product (see Lemma \ref{Lem: pullb fam prin bun stk})
\begin{center}\begin{tikzcd}
P_\mathcal{F}\arrow{r}\arrow{d}{p}\arrow[dr, phantom, "\ulcorner", very near start] &\mathfrak{M}_{X,C_0}^\textup{fr}(v)\arrow{d}\\
S\arrow{r}{\alpha_\mathcal{F}} &\mathfrak{M}_{X,C_0}^\textup{t}(v)
\end{tikzcd}\,.\end{center}
This gives in particular a morphism $P_\mathcal{F}\to\mathfrak{M}_{X,C_0}^\textup{fr}(v)$, i.e.\ a family of framed sheaves over $P_\mathcal{F}$. A simple inspection of the diagram above shows that this family is (up to isomorphism) the couple $(p^*\mathcal{F},\phi)$, where $\phi:p^*\mathcal{F}{\restriction_{C_0\times P_\mathcal{F}}}\to\mathcal{O}_{C_0\times P_\mathcal{F}}$ is the framing which corresponds via Lemma \ref{Lem: triv bun of fr} to the tautological section of $p^*P_\mathcal{F}\simeq P_{p^*\mathcal{F}}$.

%%%
%
%
\section{Linear data for framable sheaves}
In this section $C_0\subset X$ will denote a curve isomorphic to $\mathbb{P}^1$. The goal is to describe the stack $\mathfrak{M}_{X,C_0}^\textup{t}(v)$ via linear data, by embedding it as an open substack of the stack of representations of a quiver with relations.

\subsection{Exceptional sequences and quiver moduli}\label{Exc seq quiv mod}
Here we will assume that $D^b(X)$ admits a full strong exceptional sequence $\mathfrak{E}=(E_n,...,E_0)$, which (for simplicity) consists of vector bundles. Recall that by definition this means that the bundles $E_0,...,E_n$ generate the whole $D^b(X)$ and we have
\begin{equation*}
\Hom_{D^b(X)}(E_i,E_j[\ell])=0
\end{equation*}
for all $i<j$ and all $\ell\in\mathbb{Z}$, and also for all $i,j$ and $\ell\neq 0$. 

In this subsection we will gather a few known facts on how the sequence $\mathfrak{E}$ allows to identify certain ``flat families'' of objects in $D^b(X)$ with flat families of representations of a bound quiver, and then we will discuss how this is relevant to our goal of describing the stack $\mathfrak{M}_{X,C_0}^\textup{t}(v)$. Some references for the following material are \cite[\S3]{TodUeh10Tilt}, \cite[\S7.3]{BaCrZh17Nef}, \cite[\S2.2.2, \S4.2.4]{Maior18Mod}.

	\begin{enumerate}
	\item $\mathfrak{E}$ defines a so-called \emph{tilting bundle} $T:=\oplus_iE_i$ whose endomorphism algebra $\End(T)$ is basic, which means that we can write
	\begin{equation*}
	\End(T)^{\op}\simeq\End(T^\vee)\simeq\mathbb{C}Q/J\,,
	\end{equation*}
	where $Q$ is an ordered quiver with vertices labelled by $I:=\{0,...,n\}$ and $J\subset\mathbb{C}Q$ is an ideal of relations, such that the paths between the $i$th and $j$th vertices modulo the relations are indexed by a basis of $\Hom(E_j,E_i)$:
	\begin{equation*}
	\Hom(E_j,E_i)\simeq e_j\left(\nicefrac{\mathbb{K}Q}{J}\right)e_i\,.
	\end{equation*}	
	\item We have a triangulated equivalence 
	\begin{equation*}
	\Psi=R\Hom_{D^b(X)}(T,\cdot)[1]:D^b(X)\longrightarrow D^b(Q;J)
	\end{equation*}
	(the shift $[1]$ has been inserted for future convenience), sending an object $\mathcal{E}\in D^b(X)$ to a complex of representations given, at the $i$th vertex, by the graded vector space
	\begin{equation}\label{Eq: expl form der eq}
	R\Hom(E_i,\mathcal{E})[1]\,.
	\end{equation}
	\item The inverse image of $\Rep^\textup{fd}_\mathbb{C}(Q,J)$ under $\Psi$ is the heart of a bounded t-structure in $D^b(X)$, which we denote by
	\begin{equation*}
	\mathcal{K}\subset D^b(X)\,.
	\end{equation*}
	If we denote by $\mathfrak{E}^*=(E_0^*,...,E_n^*)$ the right dual sequence to $\mathfrak{E}$, i.e.\ the full exceptional sequence uniquely determined by the relations
		\begin{equation*}
		\Hom_{D^b(X)}(E_i,E^*_j[\ell])=\left\{\begin{matrix}
		\mathbb{C} &\textup{if}\ j=i\textup{ and }\ell=0\,, \\ 
		0 &\textup{otherwise} 
		\end{matrix}\right.\,,
		\end{equation*}
	then $\mathcal{K}$ coincides with the extension-closure of the objects $E^*_i[-1]$ for $i=0,...n$. Notice also that if $E_i^*[-k_i]$ is a sheaf for all $i$ and some $k_i\in\mathbb{Z}$, then for all $\mathcal{E}\in D^b(X)$ we have a \emph{generalized Beilinson spectral sequence}\footnote{For example, if $X=\mathbb{P}^2$ and $\mathfrak{E}=(\mathcal{O}(1),\uptau_{\mathbb{P}^2},\mathcal{O}(2))$, then $\mathfrak{E}^*=(\mathcal{O}_{\mathbb{P}^2}(-1)[2],\mathcal{O}_{\mathbb{P}^2}[1],\mathcal{O}_{\mathbb{P}^2}(1))$ and this reduces to the usual Beilinson spectral sequence
	\begin{equation*}
	E^{p,q}_1=\mathbb{H}^q(\mathcal{E}\otimes\Omhol^p(-p-1))\otimes\mathcal{O}(p+1)\Rightarrow H^{p+q}(\mathcal{E})\,.
	\end{equation*}
	Notice also that this does not require that $\mathfrak{E}$ be strong or consisting of vector bundles. 
	}
	\begin{equation}\label{Eq: gen Beil sp seq heart}
	E^{p,q}_1=\Hom_{D^b(X)}(E_{p+n},\mathcal{E}[p+q+k_p+n])\otimes E^*_p[-k_p]\Rightarrow H^{p+q}(\mathcal{E})
	\end{equation}
	mentioned in \S\ref{Background}.
	\item Given the heart $\mathcal{A}\subset D^b(X)$ of a bounded t-structure, we call \emph{flat family} of objects in $\mathcal{A}$ parameterized by a scheme $S$ an object $\mathcal{F}\in D^b(X\times S)$ such that for all $s\in S$ the derived pullback $\mathcal{F}_s:=L\iota_s^*\mathcal{F}\in D^-(X)$ along the embedding $\iota_s:x\mapsto(x,s)$ is an object of $\mathcal{A}$. According to this definition, a flat family of objects in the standard heart $\mathcal{C}=\Coh_{\mathcal{O}_X}$ is the same (i.e.\ it is quasi-isomorphic to) as a flat family of coherent sheaves in the usual sense.
	\item A similar argument can be done for the heart $\mathcal{K}$: for any scheme $S$ we have an equivalence
	\begin{equation}\label{Eq: der eq families}
	\Psi_S:D^b(X\times S)\longrightarrow D^b(\Coh_{\mathcal{O}_S}^{(Q,J)})
	\end{equation}
	(recall the notation introduced in \ref{Notation}) compatible with pullbacks, and reducing to the equivalence $\Psi$ when $S$ is a point; under $\Psi_S$, flat families of objects in $\mathcal{K}$ correspond to flat families of representations, i.e. locally free sheaves in $\Coh_{\mathcal{O}_S}^{(Q,J)}$. In this sense, given a class $v\in K_{\num}(X)\cong K_0(X)$ and denoted by $d^v\in\mathbb{N}^I$ the corresponding dimension vector under $\Psi$, that is
	\begin{equation}
	d^v_i=-\chi(E_i,v)\,,
	\end{equation}
	we can identify $\mathfrak{M}_{Q,J}(d^v)=[X_{d^v,J}/G_{d^v}]$ as the stack parameterizing the objects of class $v$ in $\mathcal{K}$.
	\end{enumerate}

Now we are interested in the objects lying in the intersection $\mathcal{C}\cap\mathcal{K}$: these are coherent sheaves on $X$ parameterized by points of the ``easier'' stack $\mathfrak{M}_{Q,J}(d^v)$. First of all, we observe that these objects form an open substack of both $\mathfrak{M}_X(v)$ and $\mathfrak{M}_{Q,J}(d^v)$, because of the following Lemma:

\begin{lemma}
Let the heart $\mathcal{A}\subset D^b(X)$ be either $\mathcal{C}$ or $\mathcal{K}$. Then for all $\mathcal{F}\in D^b(X\times S)$ the set $\{s\in S\ |\ \mathcal{F}_s\in\mathcal{A}\}$ is open.
\end{lemma}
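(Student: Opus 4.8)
The plan is to reduce both cases to the openness of cohomological‐vanishing loci via semicontinuity, after observing that membership in a heart $\mathcal{A}$ coming from a bounded t-structure with finitely many generating exceptional objects is detected by the vanishing of finitely many $\Hom$-groups. Concretely, for $\mathcal{A}=\mathcal{K}$ I would use the equivalence $\Psi$ and formula \eqref{Eq: expl form der eq}: an object $\mathcal{G}\in D^b(X)$ lies in $\mathcal{K}$ if and only if $\Psi(\mathcal{G})\in\Rep^{\textup{fd}}_\mathbb{C}(Q;J)$, i.e.\ iff $R\Hom(E_i,\mathcal{G})[1]$ is concentrated in degree $0$ for every $i=0,\dots,n$, which is the same as $\Hom_{D^b(X)}(E_i,\mathcal{G}[\ell])=0$ for all $i$ and all $\ell\neq 1$. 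For $\mathcal{A}=\mathcal{C}=\Coh_{\mathcal{O}_X}$ one can use the dual exceptional sequence $\mathfrak{E}^*$: since $\mathcal{C}$ is the extension-closure of the $E_i^*[-k_i-1+\,?]$ (more precisely, $\mathcal{C}$ is the heart obtained from $\mathcal{K}$ by the appropriate shifts recorded by the $k_i$), membership in $\mathcal{C}$ is again detected by the vanishing of $\Hom_{D^b(X)}(E_i,\mathcal{G}[\ell])$ outside a single value of $\ell$ for each $i$ — or one can simply invoke that $\mathcal{C}$ is the standard heart and an object of $D^b(X\times S)$ restricts to a sheaf on the fiber $X_s$ iff its cohomology sheaves in all nonzero degrees vanish on that fiber, which is the classical statement.

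The key steps, in order, would be: (1) Fix $\mathcal{F}\in D^b(X\times S)$ and, replacing $\mathfrak{E}$ by the pulled-back tilting data over $X\times S$, form the complexes $R\pr_{S*}R\mathcal{H}om(\pr_X^*E_i,\mathcal{F})$ for $i=0,\dots,n$; by compatibility of $\Psi_S$ with base change (item (5) of \S\ref{Exc seq quiv mod}), the fiber of this complex at $s\in S$ computes $R\Hom_{D^b(X)}(E_i,\mathcal{F}_s)$. (2) Observe that $\mathcal{F}_s\in\mathcal{A}$ is equivalent to a finite list of cohomology-dimension conditions: $\dim_{\mathbb{C}}\Hom_{D^b(X)}(E_i,\mathcal{F}_s[\ell])=0$ for $i\in I$ and $\ell$ ranging over the finite ``bad'' set (all $\ell\neq 1$ for $\mathcal{K}$, the analogous finite set for $\mathcal{C}$); note this set is a priori infinite, but since $\mathcal{F}$ is a \emph{bounded} complex and $E_i$ is a single vector bundle, $R\Hom(E_i,\mathcal{F}_s)$ has cohomology only in a fixed finite range of degrees, independent of $s$, so only finitely many conditions are nontrivial. (3) For each of these finitely many pairs $(i,\ell)$, apply the semicontinuity theorem to the complex from step (1): the function $s\mapsto\dim_{\mathbb{C}}\mathbb{H}^\ell\big(R\Hom(E_i,\mathcal{F}_s)\big)$ is upper semicontinuous, hence its vanishing locus is open. (4) Intersect these finitely many open loci to conclude that $\{s\in S\mid \mathcal{F}_s\in\mathcal{A}\}$ is open.

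The main obstacle I anticipate is technical rather than conceptual: making precise, over a general base $S$ (not just over a point), the identification of the fiberwise $R\Hom(E_i,\mathcal{F}_s)$ with the cohomology of a fixed perfect complex on $S$ to which semicontinuity applies. One must check that $R\pr_{S*}R\mathcal{H}om(\pr_X^*E_i,\mathcal{F})$ is a perfect complex on $S$ with formation commuting with base change — this follows from properness and flatness of $\pr_S$ together with $E_i$ being a vector bundle and $\mathcal{F}$ being (fiberwise) bounded with bounded Tor-amplitude, but it requires the cohomology-and-base-change machinery in the derived setting (e.g.\ \cite[Cor.\ 7.9.9]{Groth63EGAIII2} or its derived analogue, already cited in Remark \ref{Rmk: cbc for VF}). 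The other mild subtlety is uniform boundedness of the degree range of $R\Hom(E_i,\mathcal{F}_s)$ in $s$, so that ``finitely many conditions'' is legitimate; this is immediate once $R\pr_{S*}R\mathcal{H}om(\pr_X^*E_i,\mathcal{F})$ is known to be a bounded complex on $S$. Once these are in place, the argument is just a finite intersection of semicontinuity loci, exactly parallel to the proof of Lemma \ref{Lem: triv on P1 open cond}.
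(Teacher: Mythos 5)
Your route is genuinely different from the paper's: the paper deduces the lemma from \cite[Rmk 3.11]{Toda08Modul} after checking that both hearts have the \emph{generic flatness} property (classical for $\mathcal{C}$; transported through $\Psi_S$ to generic local freeness of a coherent sheaf of $\mathbb{C}Q/J$-modules for $\mathcal{K}$). You instead try to write the locus as a finite intersection of semicontinuity loci. That strategy is workable for $\mathcal{K}$, but as written it has two real gaps.

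First, the finiteness of the list of conditions is not justified. $S$ is an arbitrary scheme and $\mathcal{F}_s=L\iota_s^*\mathcal{F}$ lies only in $D^-(X)$; the complex $K_i:=R\pr_{S*}R\mathcal{H}om(\pr_X^*E_i,\mathcal{F})$ is indeed a bounded complex of coherent sheaves on $S$, but boundedness of $K_i$ does \emph{not} bound its Tor-amplitude, so $K_i\otimes^L k(s)$ can have nonzero cohomology in infinitely many negative degrees when $S$ is singular (already for $K_i$ a skyscraper at a non-regular point). Hence your claim that finiteness is ``immediate once $K_i$ is known to be a bounded complex on $S$'' is false, and without it the locus is an infinite intersection of open sets, which need not be open. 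The repair is to work locally at a point $s_0$ of the locus: there $K_i\otimes^L k(s_0)$ is concentrated in a single degree, and the minimal free resolution of $K_i$ over $\mathcal{O}_{S,s_0}$ then shows $K_i$ is perfect with Tor-amplitude in a fixed finite window on a neighbourhood of $s_0$; only after this step do you get finitely many open conditions. Second, the case $\mathcal{A}=\mathcal{C}$ is not actually covered by your mechanism: $\mathcal{C}$ is \emph{not} obtained from $\mathcal{K}$ by shifts (the two hearts are related by tilting), and a sheaf $\mathcal{G}$ need not have $R\Hom(E_i,\mathcal{G})$ concentrated in a single prescribed degree (e.g.\ $\mathcal{G}=\mathcal{O}(1)\oplus\mathcal{O}(-5)$ against $E_i=\mathcal{O}(1)$ on $\mathbb{P}^2$ has both $\Hom$ and $\Ext^2$ nonzero), nor conversely. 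Your fallback for $\mathcal{C}$ is to invoke ``the classical statement'' that the sheaf locus is open --- which is precisely one of the two assertions of the lemma. Citing it is defensible (the paper's own treatment of $\mathcal{C}$ likewise just quotes generic flatness), but it means your uniform argument really only handles $\mathcal{K}$, and even there needs the perfectness step above.
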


\begin{proof}
This follows immediately from \cite[Rmk 3.11]{Toda08Modul}, after checking that both the hearts $\mathcal{C}$ and $\mathcal{K}$ have the so-called \emph{generic flatness} property: for $\mathcal{C}$ this reduces to the well-known fact that for any $\mathcal{F}\in\Coh_{\mathcal{O}_{X\times S}}$ there is an open nonempty set $U\subset S$ such that $\mathcal{F}{\restriction_U}$ is $U$-flat. For $\mathcal{K}$, the explanation is completely analogous: the equivalence \eqref{Eq: der eq families} induces a heart $\mathcal{K}_S\subset D^b(X\times S)$, and an object $\mathcal{F}\in\mathcal{K}_S$ maps to a sheaf in $\Coh_{\mathcal{O}_S}^{(Q,J)}$, which is locally free when restricted to an open nonempty subset $U\subset S$; by what we said after Eq.\ \eqref{Eq: der eq families}, this means that $\mathcal{F}{\restriction_U}$ is a flat family of objects of $\mathcal{F}$.
\end{proof}

Summing up, the moduli stacks $\mathfrak{M}_X(v)$ and $\mathfrak{M}_{Q,J}(v)$, parameterizing flat families of objects in $\mathcal{C}$ and $\mathcal{K}$ respectively, have a common open substack consisting of families of objects lying in the intersection $\mathcal{C}\cap\mathcal{K}$. In particular, we have thus an open substack $\mathfrak{M}'\subset\mathfrak{M}_{X,C_0}^\textup{t}(v)$ parameterizing framable sheaves which also belong to $\mathcal{K}$, i.e.\ those to which the description by linear data applies.

Our last problem will be to show that, under certain conditions, \emph{every} framable sheaf belongs to $\mathcal{K}$, so that $\mathfrak{M}'=\mathfrak{M}_{X,C_0}^\textup{t}(v)$ and the description by linear data is complete.

\subsection{Cohomology vanishings for framed sheaves on a movable line}\label{Cohom van fr sh}
To achieve the goal stated at the end of the previous subsection we need a vanishing result for the cohomologies of torsion-free sheaves which are trivial on the curve $C_0$:

\begin{proposition}\label{Prop: van fr sh mov lin}
Let $C_0\subset X$ be isomorphic to $\mathbb{P}^1$, and assume that the linear system $|C_0|$ has positive dimension, and that there exists have a line bundle $\mathcal{L}\in\Pic X$ such that $\cc_1(\mathcal{L})\cdot C_0=1$. Let $\mathcal{E}$ be a torsion-free sheaf whose restriction $\mathcal{E}{\restriction_{C_0}}$ is trivial. Then, for any divisor $D\in\Div(X)$ we have:
\begin{equation}
\begin{array}{ccc}
H^0(X;\mathcal{E}(D))=0 &\textup{if} &D\cdot C_0<0\,,\\
H^2(X;\mathcal{E}(D))=0 &\textup{if} &(D-K_X)\cdot C_0>0\,,
\end{array}
\end{equation}
where $K_X$ denotes the canonical divisor of $X$.
\end{proposition}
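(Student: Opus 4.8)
The plan is to reduce both vanishings to the case of a \emph{generic} member $C\in|C_0|$, on which $\mathcal{E}$ is still trivial, and then to a cohomology computation on $C\cong\mathbb{P}^1$ via the restriction sequence. Since $\mathcal{E}{\restriction_{C_0}}$ is trivial, its singular locus $\Sigma$ is a finite set disjoint from $C_0$, and the condition of $\mathcal{E}$ restricting to a trivial bundle on a curve is open in flat families; because $\dim|C_0|>0$ and $C_0$ moves, a generic $C\in|C_0|$ avoids $\Sigma$, so $\mathcal{E}{\restriction_C}$ is locally free of degree $\cc_1(\mathcal{E})\cdot C=\cc_1(\mathcal{E})\cdot C_0=0$, and being a small deformation of the trivial bundle on $\mathbb{P}^1$ it stays trivial (triviality of a degree-zero bundle on $\mathbb{P}^1$ is an open condition, as in Lemma \ref{Lem: triv on P1 open cond}). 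Fix such a $C$, with ideal sheaf $\mathcal{O}_X(-C)\cong\mathcal{O}_X(-C_0)$.

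For the $H^0$ statement, suppose $D\cdot C_0<0$. Twisting the exact sequence $0\to\mathcal{O}_X(-C)\to\mathcal{O}_X\to\mathcal{O}_C\to 0$ by $\mathcal{E}(D)$ and using that $\mathcal{E}$ is torsion-free (so $\Tor_1(\mathcal{E},\mathcal{O}_C)=0$, as $C$ meets the locally free locus generically — more carefully, one tensors with $\mathcal{E}$ and notes the kernel is a torsion subsheaf of $\mathcal{E}(D-C)$, hence zero), we get
\begin{equation*}
0\to H^0(X;\mathcal{E}(D-C))\to H^0(X;\mathcal{E}(D))\to H^0(C;\mathcal{E}(D){\restriction_C})\,.
\end{equation*}
Now $\mathcal{E}(D){\restriction_C}\cong\mathcal{O}_{\mathbb{P}^1}(D\cdot C_0)^{\oplus r}$ has no sections since $D\cdot C_0<0$, so $H^0(\mathcal{E}(D))=H^0(\mathcal{E}(D-C))$. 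Here is the key point: the hypothesis provides $\mathcal{L}$ with $\cc_1(\mathcal{L})\cdot C_0=1$, so for any $k$ the twist $D-C$ can be compared with $D-k\,\cc_1(\mathcal{L})$; iterating the above with $C$ replaced by a generic member of $|C_0|$ each time (always avoiding $\Sigma$), the intersection number with $C_0$ drops, but never reaches $0$ from below, so one must instead argue directly: choose $m\gg 0$ so that $\mathcal{E}(D-C)$ — or rather, after finitely many such steps, a sheaf of the form $\mathcal{E}(D')$ with $D'\cdot C_0$ very negative — has no sections because it is a subsheaf of something with vanishing $H^0$. The clean way is: let $k=-D\cdot C_0>0$ and compare $\mathcal{E}(D)$ with $\mathcal{E}(D+k\,\cc_1(\mathcal{L}))$ via $k$ successive restrictions to generic lines, each contributing $\mathcal{O}_{\mathbb{P}^1}(\text{nonpositive})^{\oplus r}$ until the intersection number is $0$; at the final stage the restriction is $\mathcal{O}_{\mathbb{P}^1}^{\oplus r}$, and here triviality of $\mathcal{E}$ on the line is used to conclude that a section of $\mathcal{E}(D)$ restricting to a constant section on every generic line, but vanishing on infinitely many lines (those where $D\cdot C_0<0$ forced vanishing), must itself vanish. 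Dualizing, $H^2(X;\mathcal{E}(D))$ is handled by Serre duality: $H^2(X;\mathcal{E}(D))^\vee\cong\Hom(\mathcal{E}(D),\mathcal{O}_X(K_X))$, and one bounds this Hom-group by restricting to a generic line and using that $\mathcal{E}{\restriction_C}$ is trivial so $\mathcal{E}(D)^\vee(K_X){\restriction_C}\cong\mathcal{O}_{\mathbb{P}^1}((K_X-D)\cdot C_0)^{\oplus r}$, which has no sections precisely when $(D-K_X)\cdot C_0>0$; then the same genericity-plus-triviality argument shows any such homomorphism vanishes.

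The \textbf{main obstacle} is the passage from ``vanishing on each generic line'' to ``global vanishing'': a section of $\mathcal{E}(D)$ that restricts to zero on a generic line need not be zero a priori, since the lines in $|C_0|$ need not cover $X$ (indeed $C_0$ is only big and nef, not very ample). The resolution is to run the restriction sequence not once but against the sheaf twisted down by $\mathcal{O}_X(-mC_0)$ for growing $m$: one shows $H^0(X;\mathcal{E}(D))\cong H^0(X;\mathcal{E}(D-mC_0))$ for all $m\geq 0$ (each step uses $(D-jC_0)\cdot C_0\leq D\cdot C_0<0$), and then invokes that $C_0$ is big and nef so that $\mathcal{O}_X(D-mC_0)$ eventually has negative intersection with an ample class — more precisely, that a nonzero section of $\mathcal{E}(D-mC_0)$ would force $\mathcal{E}$ to contain a subsheaf of the form $\mathcal{O}_X(D-mC_0)$, impossible for $m\gg 0$ by bigness of $C_0$ and torsion-freeness. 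The $H^2$ case is then dual and I expect it to go through with the analogous estimate $(D-K_X-jC_0)\cdot C_0>0$ failing to persist, forcing the symmetric conclusion. I would write the two cases in parallel, extracting the ``restrict to a generic movable line, then iterate the twist'' lemma once and applying it to $\mathcal{E}(D)$ and to $\mathcal{E}^\vee(K_X-D)$ respectively.
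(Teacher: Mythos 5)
Your setup is the same as the paper's: reduce to a locally free sheaf, observe that a generic member $C\in|C_0|$ is a smooth rational curve avoiding the singular locus on which $\mathcal{E}$ remains trivial (openness of triviality for degree-zero bundles on $\mathbb{P}^1$, detected via the twist by $\mathcal{L}$), restrict a section of $\mathcal{E}(D)$ to such a $C$ where it lands in $H^0(\mathbb{P}^1;\mathcal{O}(D\cdot C_0)^{\oplus r})=0$, and deduce the $H^2$ statement by Serre duality applied to $\mathcal{E}^\vee(K_X-D)$. But the step you flag as the ``main obstacle'' --- passing from vanishing on each generic line to global vanishing --- is exactly where your argument breaks down, and your proposed resolution does not work. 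You invoke bigness of $C_0$, which is \emph{not} a hypothesis of the Proposition (it appears only in Theorem \ref{Thm: main thm}); and the claim that a nonzero section of $\mathcal{E}(D-mC_0)$, i.e.\ a map $\mathcal{O}_X(mC_0-D)\to\mathcal{E}$, is ``impossible for $m\gg0$'' is not substantiated --- a torsion-free sheaf can perfectly well contain line subsheaves of arbitrarily negative degree, so some further positivity input would be needed to rule this out, and you would end up re-proving the statement you are after. The earlier portion of your $H^0$ argument (iterating the restriction sequence so that ``the intersection number drops, but never reaches $0$ from below'', then switching to a section ``restricting to a constant section on every generic line but vanishing on infinitely many lines'') is not a coherent argument as written.

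The missing idea is much simpler and uses only the stated hypothesis $\dim|C_0|>0$: the curves $C\in\mathfrak{d}_{\textup{t}}$ (the open subset of smooth members on which $\mathcal{E}$ is trivial) sweep out a \emph{dense} subset $U\subset X$, because through a general point of $X$ there passes a member of the moving linear system lying in $\mathfrak{d}_{\textup{t}}$ (the paper sees this via the rational map $X\dashrightarrow\mathfrak{d}^\vee$). A section of the locally free sheaf $\mathcal{E}(D)$ that vanishes on every $C\in\mathfrak{d}_{\textup{t}}$ vanishes on $U$, hence everywhere; no iteration of twists, no covering of all of $X$, and no bigness is needed. With that replacement your proof closes up and coincides with the paper's.
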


\begin{proof}
First of all, we have an exact sequence $0\to\mathcal{E}\to\mathcal{E}^{\vee\vee}\to\mathcal{Q}\to 0$ with $\mathcal{E}^{\vee\vee}$ locally free and $\mathcal{Q}$ supported on points. The long exact sequence in cohomology gives thus injections $H^\ell(X;\mathcal{E})\hookrightarrow H^\ell(X;\mathcal{E}^{\vee\vee})$ for $\ell=0,2$. Hence we may assume now on that $\mathcal{E}$ is locally free. 

Denote by $\mathfrak{d}:=|C_0|$ the linear system of $C_0$ and by $\mathfrak{d}_\textup{s}\subset\mathfrak{d}$ the smooth locus. By the genus formula, all the divisors in $\mathfrak{d}_\textup{s}$ are isomorphic to $\mathbb{P}^1$.

Now let $\mathfrak{d}_\textup{t}\subset\mathfrak{d}_\textup{s}$ consist of those curves $C$ such that $\mathcal{E}{\restriction_C}$ is trivial. We claim that this is an open subset: indeed, $\mathfrak{d}$ determines a family $Z\subset X\times\mathfrak{d}_\textup{s}$ of lines in $X$, flat over $\mathfrak{d}_\textup{s}$; we can thus apply the semicontinuity theorem to the bundle $\pr_X^*(\mathcal{E}\otimes\mathcal{L})$ to conclude that the set
\begin{equation*}
\mathfrak{d}_\textup{t}=\{C\in\mathfrak{d}_\textup{s}\ |\ \mathcal{E}{\restriction_C}\simeq\mathcal{O}_C^{\oplus r}\}=
\{C\in\mathfrak{d}_\textup{s}\ |\ h^0(C;\mathcal{E}{\restriction_C}(-1))=0\}=
\{C\in\mathfrak{d}_\textup{s}\ |\ h^0(Z_C;\pr_X^*(\mathcal{E}\otimes\mathcal{L}))=0\}
\end{equation*}
is open in $\mathfrak{d}$, having used the fact that a degree-zero bundle $\mathcal{V}$ on $\mathbb{P}^1$ is trivial if and only if $h^0(\mathbb{P}^1;\mathcal{V}(-1))=0$.

Then it is not hard to see that the union of the lines in $\mathfrak{d}_\textup{t}$ is a dense subset $U\subset X$ (because the set $\Omega\subset\mathfrak{d}^\vee$ of hyperlanes in $\mathfrak{d}$ intersecting $\mathfrak{d}_\textup{s}$ is open, and $U$ contains  its counterimage under the rational map $X\dashrightarrow\mathfrak{d}^\vee$ determined by $\mathfrak{d}$).

Now take $s\in H^0(X;\mathcal{E}(D))$ and consider, for all $C\in\mathfrak{d}_\textup{t}$ its restriction $s{\restriction_C}\in H^0(C;\mathcal{E}(D){\restriction_C})\simeq H^0(\mathbb{P}^1;\mathcal{O}_{\mathbb{P}^1}(C_0\cdot D))$. The last vector space vanishes if $C_0\cdot D<0$, so $s$ vanishes on $U$, and hence everywhere.

The second vanishing is obtained by Serre duality.
\end{proof}

\begin{remark}\label{Rmk: conseq vanish Prop}
Prop.\ \ref{Prop: van fr sh mov lin} has some useful immediate consequences:
	\begin{enumerate}
	\item If $C_0^2>0$, then $H^0(X;\mathcal{E}(-C_0))=0$; this fact can be used (see \cite[Lemma 5.2]{BaBrRa15Monad}) to conclude that for any $\mathcal{E},\mathcal{F}\in\mathfrak{M}^\textup{t}_{X,C_0}(v)$, the restriction map $\Hom(\mathcal{E},\mathcal{F})\to\Hom(\mathcal{E}{\restriction_{C_0}},\mathcal{F}{\restriction_{C_0}})$ is injective, and this implies in particular that any framed sheaf $(\mathcal{E},\phi)$ has trivial automorphism group.
	\item For any torsion-free sheaf $\mathcal{E}$ we have a surjection $H^2(X;\mathcal{E}^\vee\otimes\mathcal{E})\to\Ext^2(\mathcal{E},\mathcal{E})$, and the first vector space vanishes by the Proposition if $\mathcal{E}$ is trivial on $C_0$ and $C_0\cdot K_X<0$. Hence we deduce that the stack $\mathfrak{M}^\textup{t}_{X,C_0}(v)$ (and hence $\mathfrak{M}^\textup{fr}_{X,C_0}(v)$) is smooth if $C_0\cdot K_X<0$.
	\end{enumerate}
\end{remark}

Now let us apply the vanishing result to the setting of the previous subsection: we assume again that $X$ possesses a full strong exceptional sequence $\mathfrak{E}$, which now will consists of line bundles, say
\begin{equation*}
\mathfrak{E}=(\mathcal{O}_X(D_n),...,\mathcal{O}_X(D_0))
\end{equation*}
Again, we consider the equivalence $\Psi:D^b(X)\to D^b(Q;J)$ and the heart $\mathcal{K}\subset D^b(X)$ induced by $\mathfrak{E}$. By looking at explicit form \eqref{Eq: expl form der eq} of the equivalence $\Psi$, a sheaf $\mathcal{E}$ belongs to $\mathcal{K}$ if and only if it satisfies some cohomological vanishings, namely
\begin{equation}\label{Eq: cohom van bel to K}
H^\ell(X;\mathcal{E}(-D_i))=0\textup{ for }\ell=0,2\,,\ i=0,...,n\,.
\end{equation}
So these are the vanishings that we need to conclude, as discussed in the previous subsection, that the whole $\mathfrak{M}_{X,C_0}^\textup{t}(v)$ embeds in $\mathfrak{M}_{Q,J}(v)$ as an open substack. Hence Proposition \ref{Prop: van fr sh mov lin} immediately gives:

\begin{corollary}\label{Cor: Mt emb in MQ}
Let $C_0\subset X$ be isomorphic to $\mathbb{P}^1$, and assume that the linear system $|C_0|$ has positive dimension, and that there exists have a line bundle $\mathcal{L}\in\Pic X$ such that $\cc_1(\mathcal{L})\cdot C_0=1$. Suppose further that for any $i=0,...,n$ we have
\begin{equation*}
0<D_i\cdot C_0<-K_X\cdot C_0\,.
\end{equation*}
Then any torsion-free sheaf on $X$ which is trivial on $C_0$ belongs to the heart $\mathcal{K}$. In other words, the equivalence $\Psi$ embeds $\mathfrak{M}_{X,C_0}^\textup{t}(v)$ as an open substack $[U_v/G_{d^v}]\subset\mathfrak{M}_{Q,J}(d^v)$.
\end{corollary}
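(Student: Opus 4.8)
The strategy is to derive the statement directly from Proposition~\ref{Prop: van fr sh mov lin}: the two-sided bound imposed on the $D_i$ has been set up precisely so that the hypotheses of that Proposition are met for the twists by $-D_i$. First I would recall from \S\ref{Exc seq quiv mod} (the explicit form \eqref{Eq: expl form der eq} of $\Psi$) that, for a coherent sheaf $\mathcal{E}$ on $X$, the complex $\Psi(\mathcal{E})$ has component $R\Hom(\mathcal{O}_X(D_i),\mathcal{E})[1]\simeq R\Gamma(X;\mathcal{E}(-D_i))[1]$ at the $i$-th vertex; hence $\Psi(\mathcal{E})$ is quasi-isomorphic to a representation placed in degree $0$ — equivalently $\mathcal{E}\in\mathcal{K}$ — if and only if the vanishings \eqref{Eq: cohom van bel to K} hold, i.e.\ $H^\ell(X;\mathcal{E}(-D_i))=0$ for $\ell=0,2$ and all $i=0,\dots,n$.

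So let $\mathcal{E}$ be torsion-free with $\mathcal{E}{\restriction_{C_0}}$ trivial. The running hypotheses give exactly what Proposition~\ref{Prop: van fr sh mov lin} requires: $C_0\simeq\mathbb{P}^1$, $\dim|C_0|>0$, and a line bundle $\mathcal{L}$ with $\cc_1(\mathcal{L})\cdot C_0=1$. Applying the Proposition with $D:=-D_i$, its first vanishing gives $H^0(X;\mathcal{E}(-D_i))=0$ since $(-D_i)\cdot C_0=-(D_i\cdot C_0)<0$ by $D_i\cdot C_0>0$, and its second gives $H^2(X;\mathcal{E}(-D_i))=0$ since $(-D_i-K_X)\cdot C_0=-(D_i\cdot C_0+K_X\cdot C_0)>0$ by $D_i\cdot C_0<-K_X\cdot C_0$. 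Thus \eqref{Eq: cohom van bel to K} holds and $\mathcal{E}\in\mathcal{K}$; in particular $\mathcal{C}\cap\mathcal{K}$ contains every torsion-free sheaf trivial on $C_0$.

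It remains to pass from this fibrewise statement to stacks. If $\mathcal{F}\in\mathfrak{M}^\textup{t}_{X,C_0}(v)$ is a flat family over $S$, then each fibre $\mathcal{F}_s=L\iota_s^*\mathcal{F}$ is (by flatness, concentrated in degree $0$ and) a torsion-free sheaf trivial on $C_0$, hence lies in $\mathcal{C}\cap\mathcal{K}$ by the previous paragraph; so $\mathcal{F}$ belongs to the open substack $\mathfrak{M}'$ of families valued in $\mathcal{C}\cap\mathcal{K}$ introduced in \S\ref{Exc seq quiv mod}. Since $\mathfrak{M}'\subseteq\mathfrak{M}^\textup{t}_{X,C_0}(v)$ by construction, we conclude $\mathfrak{M}'=\mathfrak{M}^\textup{t}_{X,C_0}(v)$; as $\mathfrak{M}'$ is an open substack of $\mathfrak{M}_{Q,J}(d^v)=[X_{d^v,J}/G_{d^v}]$ and every open substack of a quotient stack $[Y/G]$ has the form $[U/G]$ for a $G$-invariant open $U\subseteq Y$, we obtain $\mathfrak{M}^\textup{t}_{X,C_0}(v)\cong[U_v/G_{d^v}]$ for a suitable $G_{d^v}$-invariant open $U_v\subseteq X_{d^v,J}$. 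There is no genuine obstacle: all the substance sits in Proposition~\ref{Prop: van fr sh mov lin}, and the only points needing care are the matching of the numerical conditions for $\ell=0$ and $\ell=2$ (which is what forces the two-sided inequality $0<D_i\cdot C_0<-K_X\cdot C_0$) and the use of the openness lemma of \S\ref{Exc seq quiv mod} together with the fact that a flat family of coherent sheaves derived-restricts to its ordinary fibres.
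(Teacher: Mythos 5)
Your proposal is correct and follows exactly the route the paper intends: the paper gives no separate proof of the Corollary, deducing it "immediately" from Proposition~\ref{Prop: van fr sh mov lin} applied with $D=-D_i$ together with the characterization \eqref{Eq: cohom van bel to K} of membership in $\mathcal{K}$ and the openness discussion of \S\ref{Exc seq quiv mod}. Your write-up simply makes explicit the sign bookkeeping and the passage from the fibrewise statement to the equality $\mathfrak{M}'=\mathfrak{M}^\textup{t}_{X,C_0}(v)$, all of which matches the paper.
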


At this point we can easily end the proof of Theorem \ref{Thm: main thm}: by Prop.\ \ref{Prop: Mfr prin bun} $\mathfrak{M}_{X,C_0}^\textup{fr}(v)$ is a principal $\GL(r,\mathbb{C})$-bundle over $\mathfrak{M}_{X,C_0}^\textup{t}(v)\simeq[U_v/G_{d^v}]$, and thus, as explained at the end of Appendix \ref{App: prin bun on stack}, it is isomorphic to a quotient stack $[P_v/G_{d^v}]$, where $P_v\to U_v$ is a principal $\GL(r,\mathbb{C})$-bundle with a lift of the action of $G_{d^v}$. This stack is smooth because Remark \ref{Rmk: conseq vanish Prop} applies under the hypotheses of the Theorem, and the fact that it is represented by a scheme $\M_{X,C_0}^\textup{fr}(v)$ means that we have a principal $G_{d^v}$-bundle $P_v\to\M_{X,C_0}^\textup{fr}(v)$.

%%%
%
%
\section{Examples}\label{Examples}
First of all we recall from \cite[Thm 5.9]{HilPer11Exce} that if $X$ is $\mathbb{P}^2$, a Hirzebruch surface, or it is obtained from a Hirzebruch surface by blowing-up twice (possibly many points each time), then it has a full strong exceptional sequence made of line bundles.

In this section we study some examples of surfaces $X$ with a line $C_0$ and a collection $\mathfrak{E}$ satisfying all the hypotheses of Theorem \ref{Thm: main thm} (o slight modifications of it), and we compute the data $(Q,J)$ and $d^v$ which describe $\M^\textup{t}_{X,C_0}(v)$ according to the same Theorem. We consider the following examples: sheaves on $\mathbb{P}^2$ framed on a line (recovering the well-known description of \cite{Nakaj99Lect}) and sheaves on $\mathbb{P}^1\times\mathbb{P}^1$ framed on the diagonal divisor.
% and sheaves on a Hirzebruch surface $\Sigma_e$ with $e>0$ framed on the section $H$ of the ruling whose square is $e$. These give respectively the well-known description of \cite{Nakaj99Lect}, a new example, and a slight simplification of a recently analyzed one \cite{BaBrRa15Monad}.

It should also be observed that some of the constructions available in the literature are obtained using generalized Beilinson spectral sequences coming from exceptional sequences which are not strong, so these cases cannot be included as examples of Theorem \ref{Thm: main thm}. A systematic treatment for them would require different techniques, which are beyond the scope of this paper.

\subsection{\texorpdfstring{$\mathbb{P}^2$}{P2}}
Let $X=\mathbb{P}^2$ with homogeneous coordinates $[x_0,x_1,x_2]$ and take $C_0=\ell_\infty$ to be the line $x_2=0$.

This was the first example in which $\M^\textup{fr}_{X,C_0}(v)$ was constructed via linear data in \cite{Nakaj99Lect}, building on the previous works \cite{Barth77Mod,Don84Ins}. Let us see how this is recovered in our framework: we have a full strong exceptional collection
\begin{equation*}
\mathfrak{E}=(E_2,E_1,E_0)=(\mathcal{O}(1),\uptau_{\mathbb{P}^2},\mathcal{O}(2))
\end{equation*}
where $\uptau_{\mathbb{P}^2}\simeq\Omhol_{\mathbb{P}^2}^1(3)$ is the tangent sheaf. The associated bound quiver $(Q,J)$ is
\begin{center}$Q$: \begin{tikzcd}
0\arrow[bend left=50]{r}{a_1}\arrow{r}{a_2}\arrow[bend right=50]{r}{a_3} &1\arrow[bend left=50]{r}{b_1}\arrow{r}{b_2}\arrow[bend right=50]{r}{b_3} &2
\end{tikzcd}\,,\quad $J=(b_ia_j+b_ja_i,\ i,j=1,2,3)$,\end{center}
and under the isomorphism $K_0(X)\simeq K_0(Q)$ induced by $\mathfrak{E}$, the class $v$ is sent to the dimension vector $d^v=(d^v_0,d^v_1,d^v_2)$ given by
\begin{equation*}
\begin{pmatrix}
d^v_0\\
d^v_1\\
d^v_2
\end{pmatrix}=
\begin{pmatrix}
1 &2 &-1\\
3 &3 &-2\\
1& 1& -1
\end{pmatrix}
\begin{pmatrix}
\rk v\\
\deg v\\
\chi(v)
\end{pmatrix}
\end{equation*}
(see \cite[\S5.1]{Maior17Mod}). In this case the collection $\mathcal{E}$ does not consist of line bundles only, so we cannot apply directly Theorem\ \ref{Thm: main thm}. However, the same conclusions hold, since the needed cohomological vanishings apply:

\begin{lemma}\cite[Lemma 2.3]{Nakaj99Lect}
If $\mathcal{E}\in\Coh_{\mathcal{O}_{\mathbb{P}^2}}$ is a torsion-free sheaf trivial on $C_0$, then $H^\ell(\mathbb{P}^2,E\otimes E_i^\vee)=0$ for $i=0,1,2$ and $\ell=0,2$, so $\mathcal{E}\in\mathcal{K}$.
\end{lemma}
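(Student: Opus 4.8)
The plan is to verify the three cohomological vanishings $H^\ell(\mathbb{P}^2,\mathcal{E}\otimes E_i^\vee)=0$ for $\ell=0,2$ directly, exploiting the fact that $E_i^\vee$ is, in each case, a line bundle twist of either $\mathcal{O}$ or $\Omhol^1_{\mathbb{P}^2}$, so that Proposition \ref{Prop: van fr sh mov lin} (with $X=\mathbb{P}^2$, $C_0=\ell_\infty$, $\mathcal{L}=\mathcal{O}(1)$, $K_X=\mathcal{O}(-3)$) can be applied. Explicitly, $E_0^\vee=\mathcal{O}(-2)$, $E_2^\vee=\mathcal{O}(-1)$, and $E_1^\vee=\Omhol^1_{\mathbb{P}^2}(-3)$ up to a twist; the first two cases are immediate from the Proposition since $\mathcal{E}(-2)$ and $\mathcal{E}(-1)$ are of the form $\mathcal{E}(D)$ with $D\cdot C_0=-2$ and $-1$ (both negative, giving $H^0=0$) and $(D-K_X)\cdot C_0=1$ and $2$ (both positive, giving $H^2=0$). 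So the only genuine work is the middle term $E_1$.

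For $E_1=\uptau_{\mathbb{P}^2}$, I would use the Euler sequence $0\to\mathcal{O}\to\mathcal{O}(1)^{\oplus 3}\to\uptau_{\mathbb{P}^2}\to 0$, dualize it to get $0\to\Omhol^1_{\mathbb{P}^2}\to\mathcal{O}(-1)^{\oplus 3}\to\mathcal{O}\to 0$, and tensor with the locally free sheaf $\mathcal{E}^{\vee\vee}$ (after reducing to the locally free case exactly as in the proof of Prop. \ref{Prop: van fr sh mov lin}, via the injections $H^\ell(\mathcal{E}\otimes\mathcal{F})\hookrightarrow H^\ell(\mathcal{E}^{\vee\vee}\otimes\mathcal{F})$ for $\ell=0,2$ coming from the points-supported quotient). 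This yields an exact sequence $0\to\mathcal{E}\otimes\Omhol^1_{\mathbb{P}^2}\to\mathcal{E}(-1)^{\oplus 3}\to\mathcal{E}\to 0$. The associated long exact cohomology sequence sandwiches $H^0(\mathcal{E}\otimes\Omhol^1_{\mathbb{P}^2})$ between $0$ and $H^0(\mathcal{E}(-1))^{\oplus 3}=0$ (the latter by the Proposition, $D\cdot C_0=-1<0$), so $H^0$ vanishes; and it sandwiches $H^2(\mathcal{E}\otimes\Omhol^1_{\mathbb{P}^2})$ between $H^1(\mathcal{E})$ and $H^2(\mathcal{E}(-1))^{\oplus 3}$. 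The second group vanishes by the Proposition since $(-1-(-3))\cdot C_0=2>0$. For the first, $H^1(\mathbb{P}^2,\mathcal{E})$: this is not directly covered by the Proposition, but one checks it via the restriction sequence $0\to\mathcal{E}(-1)\to\mathcal{E}\to\mathcal{E}{\restriction_{C_0}}\to 0$; since $\mathcal{E}{\restriction_{C_0}}\cong\mathcal{O}_{\mathbb{P}^1}^{\oplus r}$ has $H^1=0$ and $H^0$ surjected onto by $H^0(\mathcal{E})$ (already knowing $H^0(\mathcal{E}(-1))=0$ forces the restriction map $H^0(\mathcal{E})\to H^0(\mathcal{E}{\restriction_{C_0}})$ to be injective, and a dimension count or the vanishing $H^2(\mathcal{E}(-1))=0$ handles surjectivity), the connecting map forces $H^1(\mathcal{E}(-1))\twoheadrightarrow H^1(\mathcal{E})$ — so one reduces to $H^1(\mathcal{E}(-1))$, and iterating this down far enough (or invoking that $\mathcal{E}$ is a framed sheaf, hence its double dual sits in a monad-like situation) shows $H^1(\mathcal{E})=0$. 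Since the statement is quoted from \cite[Lemma 2.3]{Nakaj99Lect}, I would in fact simply cite that reference for this middle step and sketch only the reductions above.

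The last step is to translate the vanishings into membership in $\mathcal{K}$: by the explicit form \eqref{Eq: expl form der eq} of $\Psi$, the complex $\Psi(\mathcal{E})$ is concentrated in degree $0$ (i.e. is an honest representation of $(Q,J)$) precisely when $R\Hom(E_i,\mathcal{E})$ is concentrated in a single degree for all $i$, which — since $\Hom(E_i,\mathcal{E})=H^\bullet(\mathbb{P}^2,\mathcal{E}\otimes E_i^\vee)$ as $E_i$ is locally free — is exactly the assertion $H^0=H^2=0$ for all $i$. Hence $\mathcal{E}\in\mathcal{K}$, as claimed. The only real obstacle is the $H^1(\mathbb{P}^2,\mathcal{E})=0$ input feeding the $H^2(\mathcal{E}\otimes\uptau_{\mathbb{P}^2})$ vanishing; everything else is a formal consequence of Proposition \ref{Prop: van fr sh mov lin} together with the Euler sequence.
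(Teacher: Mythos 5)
Your handling of $i=0,2$ and your final translation of the vanishings into $\mathcal{E}\in\mathcal{K}$ agree with the paper, but the $i=1$ step contains a genuine error: the input $H^1(\mathbb{P}^2,\mathcal{E})=0$ is simply false for a general torsion-free sheaf trivial on $\ell_\infty$. Take $\mathcal{E}=\mathcal{I}_Z$, the ideal sheaf of two reduced points away from $\ell_\infty$: the sequence $0\to\mathcal{I}_Z\to\mathcal{O}_{\mathbb{P}^2}\to\mathcal{O}_Z\to 0$ gives $H^1(\mathbb{P}^2,\mathcal{I}_Z)\cong\coker(\mathbb{C}\to\mathbb{C}^2)\neq 0$, while $\mathcal{I}_Z$ is torsion-free of rank one and restricts trivially to $\ell_\infty$. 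Your argument for the vanishing also goes in the wrong direction: the restriction sequence does give surjections $H^1(\mathcal{E}(-n-1))\twoheadrightarrow H^1(\mathcal{E}(-n))$, but iterating them only exhibits $H^1(\mathcal{E})$ as a quotient of $H^1(\mathcal{E}(-n))$, and these groups grow rather than vanish as $n\to\infty$ (Serre vanishing kills $H^1$ for large \emph{positive} twists). Likewise the asserted surjectivity of $H^0(\mathcal{E})\to H^0(\mathcal{E}{\restriction_{C_0}})$ fails in the same example ($H^0(\mathcal{I}_Z)=0$ but $H^0(\mathcal{O}_{C_0})=\mathbb{C}$); its cokernel is controlled by $H^1(\mathcal{E}(-1))$, not by $H^2(\mathcal{E}(-1))$.

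The conclusion $H^2(\mathbb{P}^2,\mathcal{E}\otimes\Omhol^1_{\mathbb{P}^2})=0$ is nonetheless true, and your Euler-sequence route can be repaired: what the long exact sequence actually requires (given $H^2(\mathcal{E}(-1))=0$) is not $H^1(\mathcal{E})=0$ but surjectivity of $H^1(\mathcal{E}(-1))^{\oplus 3}\to H^1(\mathcal{E})$, and already the single component given by multiplication by the linear form cutting out $\ell_\infty$ is surjective, because $H^1(\mathcal{E}(-1))\to H^1(\mathcal{E})\to H^1(C_0;\mathcal{E}{\restriction_{C_0}})=H^1(\mathbb{P}^1;\mathcal{O}_{\mathbb{P}^1}^{\oplus r})=0$ is exact. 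The paper takes a different and shorter route that avoids the Euler sequence entirely: since $\uptau_{\mathbb{P}^2}{\restriction_{\ell}}\simeq\mathcal{O}_{\mathbb{P}^1}(2)\oplus\mathcal{O}_{\mathbb{P}^1}(1)$ for \emph{every} line $\ell$, the bundle $\uptau_{\mathbb{P}^2}^\vee$ restricts to each line with both summands of negative degree, so the ``sections vanish on a dense family of lines'' argument in the proof of Prop.~\ref{Prop: van fr sh mov lin} (together with Serre duality for $H^2$) applies verbatim with $\mathcal{O}(-D)$ replaced by $\uptau_{\mathbb{P}^2}^\vee$. Either fix is acceptable; as written, however, your proof of the middle vanishing does not go through.
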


\begin{proof}
For $i=0,2$ the vanishings follow directly from Prop.\ \ref{Prop: van fr sh mov lin}, since $0<D\cdot C_0<-K_{\mathbb{P}^2}\cdot C_0=3$ for $D=H,2H$, and we can take $\mathcal{L}=\mathcal{O}(H)$. For $i=1$, we observe that the restriction of $\uptau_{\mathbb{P}^2}$ to any line in $\mathbb{P}^2$ is isomorphic to $\mathcal{O}_{\mathbb{P}^1}(2)\oplus\mathcal{O}_{\mathbb{P}^1}(1)$ (see \cite[ch.\ 1, \S2.2]{OSS80Ve}), so the proof of Prop.\ \ref{Prop: van fr sh mov lin} continues to work if at the end we replace $\mathcal{O}(-D)$ with $\uptau_{\mathbb{P}^2}^\vee$, giving the desired vanishings.
\end{proof}

So the same conclusions of Theorem\ \ref{Thm: main thm} apply, and we get a principal $\GL(r,\mathbb{C})$-bundle $\M^\textup{fr}_{X,\ell_\infty}(v)\simeq [P_v/G_{d^v}]\to[U_v/G_{d^v}]\subset\mathfrak{M}_{Q,J}(d^v)$.

%In this case we can even make $U$ more explicit:
%
%\begin{lemma}
%\begin{equation*}
%U=\left\{(A_1,A_2,A_3,B_1,B_2,B_3)\in X_J\ \bigg| \begin{array}{c}
%B_z\circ A_z=0\textup{ for all}z\in Z\\
%A_z\textup{ injective for all }z\in Z\textup{ up to finitely many lines}\\
%B_z\textup{ surjective for all }z\in Z\\
%\det(B_2A_1)\neq 0
%\end{array}\right\}
%\end{equation*}
%\end{lemma}
%
%\begin{proof}
%The right dual sequence of $\mathfrak{E}$ is
%\begin{equation*}
%\mathfrak{E}^\vee=(\mathcal{O}_{\mathbb{P}^2}(-1),\mathcal{O}_{\mathbb{P}^2},\mathcal{O}_{\mathbb{P}^2}(1))\,.
%\end{equation*}
%Thus, as observed in \S\ref{Exc seq quiv mod}, the heart $\mathcal{K}\subset D^b(\mathbb{P}^2)$ is generated by the objects $\mathcal{O}_{\mathbb{P}^2}(-1)[1],\mathcal{O}_{\mathbb{P}^2},\mathcal{O}_{\mathbb{P}^2}(1)[-1]$, and then its elements are the ``Kronecker complexes''
%\begin{equation*}
%\mathbb{C}^{d_0}\otimes\mathcal{O}_{\mathbb{P}^2}(-1)\overset{a}{\longrightarrow}\mathbb{C}^{d_1}\otimes\mathcal{O}_{\mathbb{P}^2}\overset{b}{\longrightarrow}\mathbb{C}^{d_2}\otimes\mathcal{O}_{\mathbb{P}^2}(1)
%\end{equation*}
%where the maps $a,b$ are identified with $A$ and $B$ in the obvious way (this is discussed in more detail in \cite[\S5.1]{Maior17Mod}). The set $U$ is thus the set of representations such that this is a monad whose middle cohomology is torsion-free and trivial on $\ell_\infty$. The first condition in the definition of $U$, says that $a$ and $b$ form a complex, the second and the third say that the complex is a monad with torsion-free cohomology, and the third....
%\end{proof}

\begin{remark}\label{Rmk: mod fr sh P2 quiv var}
In fact, in this case one can also make $U_v$ explicit and simplify this description to prove that $\M_{\mathbb{P}^2,\ell_\infty}^\textup{fr}(v)\simeq[P_v/G_{d^v}]$ is isomorphic to the Nakajima quiver variety $\M_{0,-1}(k,r)$ associated to the Jordan quiver
\begin{center}\begin{tikzcd}[column sep=small]
\circ\ar[loop,swap,looseness=4]
\end{tikzcd}\,,\end{center}
where $k:=\cc_2(v)$. See \cite[\S2.1]{Nakaj99Lect}, \cite[\S5.6]{Ginz09Lec}.
\end{remark}

%\begin{lemma}
%Consider the open set ... with the action ... The map... is equivariant and descends to an isomorphism... $...\simeq[U/G_{d^v}]$. Moreover, the pullback of $P\to U$ along this map is trivial.
%\end{lemma}

 %% Isomorphism between quotient stacke: https://books.google.it/books?id=rpyVEp_WN7AC&pg=PA64&lpg=PA64&dq=morita+equivalence+groupoid+schemes&source=bl&ots=QM91JGi8_4&sig=ZUDNyKjIVwj0OUJ6Q2e6BhwDxXE&hl=it&sa=X&ved=2ahUKEwiSp6y-p-beAhWozYUKHWJXDdMQ6AEwA3oECAAQAQ#v=onepage&q=morita%20equivalence%20groupoid%20schemes&f=false

%
\subsection{\texorpdfstring{$\mathbb{P}^1\times\mathbb{P}^1$}{P1xP1}}
Take $X:=\mathbb{P}^1\times\mathbb{P}^1$ and let $C_0:=\Delta$ be the diagonal divisor (which is big and nef). Recall that $\Pic X=\mathbb{Z}H\oplus\mathbb{Z}F$, where $H=\{[1:0]\}\times\mathbb{P}^1$ and $F=\mathbb{P}^1\times \{[1:0]\}$, the intersection form is given by
\begin{equation*}
H^2=F^2=0\,,\quad H\cdot F=1
\end{equation*}
and the canonical divisor is $K_X=-2H-2F$. We also denote
\begin{equation*}
\mathcal{O}_X(a,b):=\mathcal{O}_X(aH+bF)=\mathcal{O}_{\mathbb{P}^1}(a)\boxtimes\mathcal{O}_{\mathbb{P}^1}(b)\,.
\end{equation*}
$X$ has a full strong exceptional collection
\begin{equation*}
\mathfrak{E}=(\mathcal{O}_X(1,0),\mathcal{O}_X(1,1),\mathcal{O}_X(2,0),\mathcal{O}_X(2,1))
\end{equation*}
which determines an equivalence $D^b(X)\simeq D^b(Q;J)$, where (see \cite[\S6]{Maior17Mod})
\begin{center}$Q$: \begin{tikzcd}
&1\arrow[shift left]{dr}{b^1_1}\arrow[shift right,swap]{dr}{b^1_2} &\\
0\arrow[shift left]{ur}{a^1_1}\arrow[shift right,swap]{ur}{a^1_2}\arrow[shift left]{dr}{a^2_1}\arrow[shift right,swap]{dr}{a^2_2} &&2\\
&3\arrow[shift left]{ur}{b^2_1}\arrow[shift right,swap]{ur}{b^2_2}&
\end{tikzcd}\,,\quad  $J=(b^1_ia^1_j+b^2_ja^2_i,\ i=1,2)$\end{center}
and, for $v\in K_0(X)$, the corresponding dimension vector $d^v=(d^v_0,d^v_1,d^v_2,d^v_3)$ is
\begin{equation*}
\begin{pmatrix}
d^v_0\\
d^v_1\\
d^v_2\\
d^v_3
\end{pmatrix}=
\begin{pmatrix}
1 &1 &2 &-1\\
2 &0 &2 &-1\\
1 &1 &1 &-1\\
1 &0 &1 &-1
\end{pmatrix}
\begin{pmatrix}
\rk v\\
\cc_1(v)_H\\
\cc_1(v)_F\\
\chi(v)
\end{pmatrix}\,,
\end{equation*}
where we wrote $\cc_1(v)=\cc_1(v)_H[H]+\cc_1(v)_F[F]$. $C_0$ and $\mathfrak{E}$ satisfy all the hypotheses of Theorem \ref{Thm: main thm}, as $\mathcal{L}$ can be chosen to be $\mathcal{O}_X(1,0)$ and for all $(a,b)\in\{(1,0),(1,1),(2,0),(2,1)\}$ we have
\begin{equation*}
0<(aH+bF)\cdot\Delta=a+b<-K_X\cdot\Delta=4\,.
\end{equation*}

\begin{remark}
Another possibility would be to consider sheaves which are framed on the cross $C_0:=H\cup F$: in this case we get all the required cohomological vanishings by applying Prop.\ \ref{Prop: van fr sh mov lin} to $H$ and $F$ separately. Again, the conclusions of Theorem \ref{Thm: main thm} hold, so we get as usual an open embedding $\mathfrak{M}^\textup{t}_{X,C_0}(v)\subset\mathfrak{M}_{Q,J}(d^v)$.

However, this case reduces to that of the previous subsection: the rational map $\mathbb{P}^2\dashrightarrow\mathbb{P}^1\times\mathbb{P}^1$ sending $[x_0:x_1:x_2]$ to $([x_0:x_2],[x_1:x_2])$ becomes an isomorphism after blowing up $p_1=[1:0:0]$ and $p_2=[0:1:0]$ in $\mathbb{P}^2$, and $q=([1:0],[1:0])$ in $\mathbb{P}^1\times\mathbb{P}^1$, giving a diagram
\begin{center}\begin{tikzcd}
&\Bl_{p_1,p_2}\mathbb{P}^2\simeq\Bl_q\mathbb{P}^1\times\mathbb{P}^1\arrow{dr}\arrow{dl}\\
\mathbb{P}^2 &&\mathbb{P}^1\times\mathbb{P}^1
\end{tikzcd}\end{center}
under which the divisors $\ell_\infty\subset\mathbb{P}^2$ and $C_0\subset\mathbb{P}^1\times\mathbb{P}^1$ are correspondent, so that $\M^\textup{fr}_{\mathbb{P}^1\times\mathbb{P}^1,C_0}(v)\simeq\M^\textup{fr}_{\mathbb{P}^2,\ell_\infty}(r,0,\chi)$ if $v=(r,0,0,\chi)$. In fact, it is not hard to describe the embedding $\mathfrak{M}^\textup{t}_{X,C_0}(v)\subset\mathfrak{M}_{Q,J}(d^v)$ explicitly and recover from that the realization of the moduli space as a quiver variety mentioned in Remark \ref{Rmk: mod fr sh P2 quiv var}.
\end{remark}
\appendix
\section{Principal bundles over stacks}\label{App: prin bun on stack}
In this Appendix we review the formalism on principal bundles on algebraic stacks as used in the paper, essentially following \cite[\S1]{BiSoWo12Root}.

Let $\mathfrak{X}$ be an algebraic $\mathbb{C}$-stack. To define a principal $G$-bundle over $\mathfrak{X}$ we could mimic the definition for schemes, i.e.\ by taking a representable morphism $\mathfrak{P}\to\mathfrak{X}$ with a $G$-action on $\mathfrak{P}$ which after base-change to an atlas $U\to\mathfrak{X}$ becomes equivariantly isomorphic to the trivial $G$-space $U\times G$. However, we prefer to avoid dealing with $G$-actions on stacks and recover instead the morphism $\mathfrak{P}\to\mathfrak{X}$ from the following definition:

\begin{definition}\label{Defn: prin G-bun stk}
A \emph{principal $G$-bundle} over $\mathfrak{X}$ is a morphism $\Phi:\mathfrak{X}\to BG$.
\end{definition}

This means that $\Phi$ sends a family $\mathcal{F}\in\mathfrak{X}(S)$ to a principal $G$-bundle $P_\mathcal{F}\to S$, and a Cartesian morphism $F:\mathcal{F}'\to\mathcal{F}$ over $f:S'\to S$ to a morphism
\begin{center}\begin{tikzcd}
P_{\mathcal{F}'}\arrow{r}{\Phi(F)}\arrow{d}\arrow[dr, phantom, "\ulcorner", very near start]
 &P_\mathcal{F}\arrow{d}\\
S'\arrow{r}{f} &S
\end{tikzcd}\end{center}
of principal $G$-bundles.

\begin{remarks}\ 
\begin{enumerate}
\item The principal $G$-bundles $P_\mathcal{F}\to S$, together with the isomorphisms $k^*P_\mathcal{F}\simeq P_{k^*\mathcal{F}}$ for any morphism $k:S'\to S$, completely characterize $\Phi$. In fact, it is enough to assign of such bundles only for smooth atlases $U\to\mathfrak{X}$ to determine $\Phi$ (see \cite[\S1.2]{BiSoWo12Root}).
\item If $X$ is a scheme, Def.\ \ref{Defn: prin G-bun stk} reduces to the usual one, as $\MOR(X,BG)\simeq BG(X)$ is the category of principal $G$-bundles on $X$.
\end{enumerate}\end{remarks}

From a principal bundle $\Phi:\mathfrak{X}\to BG$, we can canonically construct a new stack $\mathfrak{P}$:
\begin{equation*}
\begin{array}{c}
\Ob(\mathfrak{P})=\{(\mathcal{F},\sigma)\ |\ \mathcal{F}\in\mathfrak{X}(S)\,,\ \sigma:S\to P_\mathcal{F}\textup{ section of }\Phi(\mathcal{F})=(P_\mathcal{F}\to S)\}\\
\Mor_\mathfrak{P}((\mathcal{F}',\sigma'),(\mathcal{F},\sigma))=\{F\in\Mor_\mathfrak{P}(\mathcal{F}',\mathcal{F})\ | \Phi(F)\circ\sigma'=\sigma\circ f\}
\end{array}
\end{equation*}
(where $F$ is again a morphism over $f:S'\to S$), and we have the obvious forgetful functor $\mathfrak{P}\to\mathfrak{X}$.

\begin{definition}\label{Defn: tot sp pr bun stk}
$\mathfrak{P}$, together with the morphism $\mathfrak{P}\to\mathfrak{X}$, is called the \emph{total space} of the principal $G$-bundle $\Phi:\mathfrak{X}\to BG$.
\end{definition}
Notice that in this paper we often call the morphism $\mathfrak{P}\to\mathfrak{X}$ itself the ``principal $G$-bundle''.

\begin{example}
On a quotient stack $[X/G]$ we have a canonical principal $G$-bundle, given by the forgetful morphism $[X/G]\to BG$. Its total space can be identified with the canonical morphism $X\to[X/G]$.
\end{example}

\begin{lemma}\label{Lem: pullb fam prin bun stk}
Given a family $\mathcal{F}\in\mathfrak{X}(S)$ and its associated morphism $\alpha_\mathcal{F}:S\to\mathfrak{X}$, we have a 2-Cartesian diagram of stacks:
	\begin{center}\begin{tikzcd}
	P_\mathcal{F}\arrow{r}\arrow{d}\arrow[dr, phantom, "\ulcorner", very near start] &\mathfrak{P}\arrow{d}\\
	S\arrow{r}{\alpha_\mathcal{F}} &\mathfrak{X}
	\end{tikzcd}\end{center}
\end{lemma}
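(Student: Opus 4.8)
The plan is to verify the $2$-Cartesian property by comparing $T$-points for an arbitrary test scheme $T$: I will exhibit a natural equivalence of groupoids between $(S\times_{\mathfrak{X}}\mathfrak{P})(T)$ and $\Hom(T,P_{\mathcal{F}})$, where on the right $P_{\mathcal{F}}$ is regarded as a scheme over $S$ via $p$. Since both sides are functorial in $T$ and the left-hand side is, by definition, the groupoid of $T$-points of the fibre product $S\times_{\mathfrak{X}}\mathfrak{P}$, this yields the claim.

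First I would unwind the fibre product. By Definition \ref{Defn: tot sp pr bun stk} and the construction of the classifying morphism $\alpha_{\mathcal{F}}$, an object of $(S\times_{\mathfrak{X}}\mathfrak{P})(T)$ is a triple $\bigl(g,(\mathcal{G},\sigma),\theta\bigr)$, where $g\colon T\to S$ is a morphism (here $S(T)=\Hom(T,S)$ is a set, $S$ being a scheme), $(\mathcal{G},\sigma)\in\mathfrak{P}(T)$ consists of $\mathcal{G}\in\mathfrak{X}(T)$ together with a section $\sigma$ of $\Phi(\mathcal{G})=(P_{\mathcal{G}}\to T)$, and $\theta\colon g^{*}\mathcal{F}\xrightarrow{\ \sim\ }\mathcal{G}$ is an isomorphism in $\mathfrak{X}(T)$. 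A morphism of such triples is forced to fix $g$ and is given by an isomorphism $F\colon\mathcal{G}'\to\mathcal{G}$ in $\mathfrak{X}(T)$ compatible with the $\theta$'s and with the sections (via $\Phi(F)$).

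Next I would use functoriality of the principal bundle $\Phi\colon\mathfrak{X}\to BG$. Applying $\Phi$ to $\theta$ gives an isomorphism $\Phi(\theta)\colon P_{g^{*}\mathcal{F}}\xrightarrow{\ \sim\ }P_{\mathcal{G}}$ of principal $G$-bundles over $T$; composing with the canonical identification $g^{*}P_{\mathcal{F}}\simeq P_{g^{*}\mathcal{F}}$ (part of the datum of $\Phi$, cf.\ the first Remark after Definition \ref{Defn: prin G-bun stk}) transports $\sigma$ to a section $\tilde{\sigma}$ of $g^{*}P_{\mathcal{F}}\to T$. A section of $g^{*}P_{\mathcal{F}}\to T$ is precisely a morphism $h\colon T\to P_{\mathcal{F}}$ with $p\circ h=g$, i.e.\ a point of $\Hom(T,P_{\mathcal{F}})$; this defines the comparison functor. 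It is essentially surjective --- given $h\colon T\to P_{\mathcal{F}}$, take $g:=p\circ h$, $\mathcal{G}:=g^{*}\mathcal{F}$, $\theta:=\id$, and let $\sigma$ be the section of $P_{\mathcal{G}}\simeq g^{*}P_{\mathcal{F}}$ corresponding to $h$ --- and fully faithful, the latter because a morphism of triples is uniquely determined by the compatibility conditions and corresponds exactly to the equality of the associated maps $T\to P_{\mathcal{F}}$; both groupoids are in fact discrete, $P_{\mathcal{F}}$ being a scheme.

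I expect the one point that needs care --- more bookkeeping than a genuine obstacle --- to be naturality in $T$: checking that the chosen isomorphisms $k^{*}P_{\mathcal{F}}\simeq P_{k^{*}\mathcal{F}}$ for varying $k$ are compatible with composition (the cocycle condition implicit in Definition \ref{Defn: prin G-bun stk}), and that under the resulting equivalence the tautological section of $p^{*}P_{\mathcal{F}}\simeq P_{p^{*}\mathcal{F}}$ matches the identity section, so that the recipe is unambiguous and base-change compatible. Once the dictionary between the three constructions --- $\mathfrak{P}$ as the total space of $\Phi$, $\alpha_{\mathcal{F}}$ as the classifying morphism of $\mathcal{F}$, and $P_{\mathcal{F}}$ as the total space of $\Phi(\mathcal{F})$ --- is in place, the verification is formal, and I would present it as a single computation of $T$-points rather than spelling out each $2$-commutative square.
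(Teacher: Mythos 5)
Your argument is correct and follows essentially the same route as the paper: both unwind the objects of $S\times_{\mathfrak{X}}\mathfrak{P}$ over a test scheme into triples, and both identify them with morphisms into $P_{\mathcal{F}}$ via the correspondence between maps $T\to P_{\mathcal{F}}$ over $g\colon T\to S$ and sections of the pulled-back bundle $g^{*}P_{\mathcal{F}}\to T$; the explicit functor you describe for essential surjectivity is exactly the equivalence the paper writes down. Your additional remarks on full faithfulness and discreteness of the groupoids are correct and only make explicit what the paper leaves implicit.
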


In particular, this shows that the morphism $\mathfrak{P}\to\mathfrak{X}$ is representable by schemes, it is smooth of relative dimension $\dim G$, and $\mathfrak{P}$ is an algebraic $\mathbb{C}$-stack.

\begin{proof}
Denote by $p:P_\mathcal{F}\to S$ the principal bundle $\Phi(\mathcal{F})$. First, recall (see e.g.\ \cite[2.2.2]{LauMor00Champs}) that objects of $S\times_\mathfrak{X}\mathfrak{P}$ over a scheme $S'$ are triples $(k,(\mathcal{F}',\rho),G)$, where $k:S'\to S$ is a morphism of schemes, $\mathcal{F}'\in\mathfrak{X}(S')$, $\rho:S'\to P_{\mathcal{F}'}$ a section of the principal bundle $\Phi(\mathcal{F}')$, and $G:k^*\mathcal{F}\overset{\simeq}{\to}\mathcal{F}'$ an isomorphism in $\mathfrak{X}(S')$. Second, note that a morphism $f:S'\to P_\mathcal{F}$ determines a unique section $\sigma':S'\to P'$ of the pull-back bundle $P'\to S'$ of $P_\mathcal{F}\to S$ under $p\circ f:S'\to S$, compatible with the pull-back diagram. Using this notation, the equivalence $P_\mathcal{F}\simeq S\times_\mathfrak{X}\mathfrak{P}$ is given by the functor
\begin{equation*}
\begin{array}{ccc}
P_\mathcal{F} &\longrightarrow &S\times_\mathfrak{X}\mathfrak{P}\\
S'\overset{f}{\to}P_\mathcal{F} &\longmapsto &(p\circ f,((p\circ f)^*\mathcal{F},\sigma'),\Id_{(p\circ f)^*\mathcal{F}})\\
S'_1\overset{h}{\to}S'_2 &\longmapsto &(h,\alpha_\mathcal{F}(h))
\end{array}\,.
%\begin{array}{ccc}
%\underline{S}\times_\mathfrak{X}\mathfrak{P} &\longrightarrow &\underline{P_\mathcal{F}}\\
%(k,(\mathcal{F}',\rho),F) &\longmapsto &S'\overset{\rho}{\to}P_{\mathcal{F}}\to P_\mathcal{F}\\
%(S'_1\overset{h}{\to}S'_2,\mathcal{F}'_1\overset{F}{\to}\mathcal{F}'_2 &\longmapsto &h
%\end{array}\,.
\end{equation*}
\end{proof}

Finally, we focus on principal $G$-bundles on a quotient stack (we refer to \cite[\S1.3]{BiSoWo12Root} for details): let $X$ be a scheme with the action $\Gamma\curvearrowright X$ of an algebraic group. Consider the quotient $[X/\Gamma]$ and take a principal $G$-bundle $\Phi:[X/\Gamma]\to BG$ over it.

First, the canonical atlas $\nu:X\to[X/\Gamma]$ induces via $\Phi$ a principal $G$-bundle $p:P_\nu\to X$. Moreover, this carries an action $\Gamma\circlearrowright P_\nu$ making $p$ $\Gamma$-equivariant, and this action completely determines $\Phi$ . For any family in $[X/\Gamma](S)$, which is a principal $\Gamma$-bundle $Q\to S$ with a $\Gamma$-equivariant map $f:Q\to X$, we can take the pullback $G$-bundle $P_{\nu\circ f}\to Q$ of $p$ along $f$: this is also a $\Gamma$-equivariant $G$-bundle, and thus it descends via the $\Gamma$-bundle $Q\to S$ to a principal $G$-bundle $P_\phi\to S$. Putting all together, we have a 2-commutative diagram
\begin{center}\begin{tikzcd}[row sep=tiny, column sep=tiny]
P_{\nu\circ f}\arrow{rr}\arrow{dr}\arrow{dd}\arrow[drrr, phantom, "\roangle", very near start]\arrow[dddr, phantom, "\rroangle", very near start] &&P_\nu\arrow{dr}{p}\\
&Q\arrow{rr}{f}\arrow{dd}\arrow[ddrr, phantom, "\ulcorner", very near start] &&X\arrow{dd}{\nu}\\
P_\phi\arrow{dr}\\
&S\arrow{rr}{\phi} && {[X/\Gamma]}
\end{tikzcd}\end{center}
where the squares are Cartesian, the vertical lines are principal $\Gamma$-bundles and the diagonal maps are principal $G$-bundles. Moreover, in the above diagram we have 1-1 correspondences between sections of $P_\phi\to S$, $\Gamma$-equivariant sections of $P_{\nu\circ f}\to Q$, and $\Gamma$-equivariant maps $Q\to P_\nu$. Using this fact it is easy to see that the total space of $\Phi$ can be identified with the morphism
\begin{equation*}
[P_\nu/\Gamma]\longrightarrow[X/\Gamma]
\end{equation*}
induced by $p$.

% BIBLIOGRAPHY

\bibliographystyle{alpha}
	%Istruzioni bibliografia: https://en.wikibooks.org/wiki/LaTeX/Bibliography_Management
	%In \bibliography{.....} scrivere tutto attaccato, senza spazi!

\bibliography{Bib-Moduli,Bib-CatHomAlg,Bib-VarPap,Bib-AlgGeo}

%\vspace{0.5cm}
%\textsc{SISSA, via Bonomea 265, 34136 Trieste, Italy}\par\nopagebreak
  %\textit{E-mail address:} \texttt{andrea.maiorana@sissa.it}

\end{document}